\definecolor{darkgreen}{rgb}{0,0.5,0}
\numberwithin{equation}{section}
\newtheorem{thm}[equation]{\sc Theorem}
\newtheorem{lem}[equation]{\sc Lemma}
\newtheorem{prop}[equation]{\sc Proposition}
\theoremstyle{remark}
\newtheorem{rem}[equation]{\sc Remark }
\newtheorem{ex}[equation]{\sc Example}
\renewcommand{\@seccntformat }[1]{\csname the#1\endcsname. }
\font\Mssym=msbm10  scaled \magstep 1
\def\NN{{{\mbox{\Mssym N}}}}
\def\PP{{{\mbox{\Mssym P}}}}
\def\ZZ{{{\mbox{\Mssym Z}}}}
\def\CP{{\cal P}}
\def\valg#1{{\bf Alg}_K(d)}
\def\ind{\mbox{{\rm ind}}}
\def\Coker{\mbox{\rm Coker}}
\def\type{\mbox{\rm type}}
\def\mod{\mbox{{\rm mod}}}
\def\mapr#1#2{\smash{\mathop{\longrightarrow}\limits^{#1}_{#2}}}
\def\longarr#1#2{{\buildrel{#1} \over {\hbox to #2pt{\rightarrowfill}}}}
\def\Hom{\mbox{\rm Hom}}
\def\Ext{\mbox{\rm Ext}}
\def\Coker{\mbox{\rm Coker}}
\def\bc{\begin{center}}
\def\ec{\end{center}}
\def\soc{\text{soc}\,}
\begin{document}



\bigskip\bigskip
\begin{center}
{\large\bf The existence of Hall polynomials for \\[1ex]
 $x^2$-bounded invariant subspaces of nilpotent linear operators}
\end{center}

\smallskip

\begin{center}
Stanis\l aw Kasjan and Justyna Kosakowska
\\

\vspace{1cm}
\end{center}

\date{}

\begin{abstract}
 We prove the existence of Hall polynomials for $x^2$-bounded  invariant subspaces
 of nilpotent linear operators.
\end{abstract}

\section{Introduction}

Let $k$ be a~field and let $k[x]$ be the algebra of polynomials in one variable $x$ and
coefficients in the field $k$. We denote by $\mathcal{N}=\mathcal{N}(k)$ the category of all
finitely generated nilpotent $k[x]$-modules, i.e. each  object of $\mathcal{N}$ is isomorphic to a module of the form:
$$
N_\alpha=N_\alpha(k)=k[x]/(x^{\alpha_1})\oplus\ldots\oplus k[x]/(x^{\alpha_m}),
$$
where $\alpha=(\alpha_1,\ldots,\alpha_m)$ is a~partition, that is a~sequence of natural numbers satisfying
$\alpha_1\ge \ldots\ge \alpha_m$. The map $\alpha\mapsto N_\alpha(k)$ defines a~bijection
between the set of all partitions and the set of isomorphism classes of objects in $\mathcal{N}(k)$.
We call the partition $\alpha$ the {\em  type} of $N_\alpha(k)$ and denote it by $\type(N_\alpha(k))$.

Let $\alpha,\beta,\gamma$ be a~triple of partitions. It is
well-known that there exists the {\bf Hall polynomial}
$\varphi_{\alpha,\gamma}^\beta\in \mathbb{Z}[t]$, i.e. a~polynomial
such that for any finite field~$k$:
 $$F_{N_\alpha(k),N_\gamma(k)}^{N_\beta(k)}=\varphi_{\alpha,\gamma}^\beta(q),$$
 where $q=|k|$,
$$
F_{N_\alpha(k),N_\gamma(k)}^{N_\beta(k)}=|\{U\subseteq N_\beta:U \mbox{ is a submodule of } N_\beta\; ,\;\; U\simeq N_\alpha \mbox{ and } N_\beta/U\simeq N_\gamma\}|
$$
and given a set $X$ we denote by $|X|$ its cardinality.

The existence of Hall polynomials
in the category $\mathcal{N}=\mathcal{S}_0$ was proved by P.~Hall, let us quote also I. G. Macdonald, T. Klein, J. A. Green, A. Zelevinski, see \cite{macd}, where one can find the connections
to symmetric functions. C. M. Ringel adopted the theory of Hall polynomials and Hall algebras to the representations
theory of finite dimensional algebras, see \cite{ringel}. In \cite{ringel92} Ringel conjectured the existence of Hall polynomials
for all representation-finite algebras.

The main aim of the paper is to prove Theorem \ref{thm-main}, which states that there  exist Hall polynomials for finite dimensional $\Lambda_{2,w}(k)$-modules for all $w\geq 2$, where
$$
\Lambda_{r,w}(k)=\left[\begin{array}{cc}k[x]/(x^r)&k[x]/(x^r)\\0&k[x]/(x^w)\end{array}\right].
$$

Investigation of $\Lambda_{r,w}$-modules is related to the Birkhoff problem of classification subgroups of finite abelian groups, see \cite{birkhoff}. Categories of finite dimensional $\Lambda_{r,w}$-modules are wild in general,
see \cite{simson}, that means, for arbitrary $r,w$ there is no hope for a~nice classification of $\Lambda_{r,w}$-modules.
However there ere many results describing properties of
modules over these algebras: e.g. \cite{arnold,bhw,kap,kos-sch,rs,sch,simson}.

The paper is organized as follows.
\begin{itemize}
 \item In Section \ref{sec-prel} we present
 preliminaries
on $\Lambda_{2,w}(k)$-modules
and we formulate the main Theorem \ref{thm-main}.
\item Section \ref{sec-hall-alg} contains definitions and some elementary facts about
Ringel-Hall algebras.
 \item In Section \ref{section_classical} certain
 properties of the Steinitz's classical Hall algebra are collected. Moreover applying properties of the Kostka numbers we prove the existence of some polynomials that are needed in the proof of the main result.
 \item The most computational part of the proof of the main result is  placed in Section \ref{sec-relations}, where some relations in the Ringel-Hall algebra of $\Lambda_{2,w}(k)$ are collected.
\item Finally, we finish in Section \ref{sec-hall-poly} the proof of Theorem \ref{thm-main}.
\end{itemize}

\section{Preliminaries
and the main theorem}\label{sec-prel}

By $\soc N$ we denote the socle of a $k[x]$-module
$N\in \mathcal{N}$, i.e. the sum of all simple submodules. The following fact seems to be well-known.

\begin{lem}\label{lem_summand}
Let $0\neq v\in\soc N$, where $N\in\mathcal{N}$ and let $v=x^{m-1}v'$ for some $v'\in N$, where $m$ is the maximal number such that $x^{m-1}$ divides $v$. Then the $k[x]$-submodule $\langle v'\rangle$ of $N$ generated by $v'$ is a direct summand of $N$ isomorphic to $k[x]/(x^m)$.
\end{lem}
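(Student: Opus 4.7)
The plan is to prove the lemma in two stages: first identify $\langle v'\rangle$ up to isomorphism, then realize it as a direct summand by modifying a fixed decomposition of $N$. For the first stage, since $v\in\soc N$ we have $xv=0$, so $x^m v'=x\cdot v=0$; combined with $x^{m-1}v'=v\neq 0$, this shows that the annihilator of $v'$ in $k[x]$ is precisely $(x^m)$, giving $\langle v'\rangle\cong k[x]/(x^m)$ and $\dim_k\langle v'\rangle=m$.

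For the direct-summand statement, I would fix a decomposition $N=\bigoplus_{i=1}^{s}\langle e_i\rangle$ with $\langle e_i\rangle\cong k[x]/(x^{\alpha_i})$ and expand $v'=\sum_{i=1}^{s} c_i e_i$ with $c_i\in k[x]/(x^{\alpha_i})$. The key step is to produce an index $j$ with $\alpha_j=m$ and $c_j(0)\neq 0$. The hypothesis that $m$ is maximal with $x^{m-1}\mid v$ translates to $v\notin x^m N$. Writing $v=\sum_i x^{m-1}c_i e_i$ and inspecting when each summand lies in $x^m\langle e_i\rangle$, one checks that $v\in x^m N$ holds precisely when $c_i(0)=0$ for every $i$ with $\alpha_i\geq m$. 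Its failure yields some $j$ with $\alpha_j\geq m$ and $c_j(0)\neq 0$. For such a $j$ the polynomial $c_j$ is a unit in $k[x]/(x^{\alpha_j})$, so $x^{\alpha_j-1}c_j e_j\neq 0$ in $\langle e_j\rangle$; since this element is the $\langle e_j\rangle$-component of $x^{\alpha_j-1}v'$ and $x^m v'=0$, we must have $\alpha_j-1<m$, hence $\alpha_j=m$.

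With such a $j$ in hand, I would define a $k[x]$-linear endomorphism $\phi\colon N\to N$ by $\phi(e_j)=v'$ and $\phi(e_i)=e_i$ for $i\neq j$; well-definedness is immediate from $x^m v'=0$. Using the invertibility of $c_j$ in $k[x]/(x^m)$, one solves $v'=c_j e_j+\sum_{i\neq j} c_i e_i$ for $e_j$ as a $k[x]$-linear combination of $v'$ and the $e_i$ with $i\neq j$, showing that $\phi$ is surjective and hence, being a surjective $k$-linear endomorphism of a finite-dimensional space, an automorphism. Applying $\phi$ to the original decomposition yields $N=\langle v'\rangle\oplus\bigoplus_{i\neq j}\langle e_i\rangle$, proving the claim.

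The step I expect to require the most care is the extraction of the index $j$ meeting both $\alpha_j=m$ and $c_j(0)\neq 0$ simultaneously; both conditions are essential (the first so that dimensions match, the second so that $\phi$ is surjective), and this is precisely where the maximality assumption on $m$ enters the argument in an essential way.
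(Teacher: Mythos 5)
Your proof is correct and complete. The paper itself gives no argument for this lemma---its ``proof'' is the single sentence that the claim follows directly from the description of finite dimensional nilpotent $k[x]$-modules---so your write-up supplies precisely the details the authors leave implicit. Your reduction is the standard one: fix a decomposition $N=\bigoplus_i\langle e_i\rangle$ with $\langle e_i\rangle\cong k[x]/(x^{\alpha_i})$, write $v'=\sum_ic_ie_i$, use $v\notin x^mN$ to produce an index $j$ with $\alpha_j\ge m$ and $c_j(0)\neq 0$, use $x^mv'=0$ to force $\alpha_j=m$, and then replace $e_j$ by $v'$ via the automorphism that fixes the other summands. The key verification---that $v\in x^mN$ if and only if $c_i(0)=0$ for every $i$ with $\alpha_i\ge m$---is carried out correctly (the components with $\alpha_i<m$ contribute nothing either way), and the surjectivity of $\phi$ follows from the invertibility of $c_j$ in $k[x]/(x^m)$ exactly as you say. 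This is an honest, self-contained proof of a fact the paper treats as folklore.
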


\begin{proof} Follows directly form the description of the finite dimensional nilpotent $k[x]$-modules.
\end{proof}

In what follows we  use the standard notation (see \cite{macd}): Given a partition $\alpha=(\alpha_1,\ldots,\alpha_m)$ we set $|\alpha|=\sum_i\alpha_i$, ${\bf n}(\alpha)=\sum_i(i-1)\alpha_i$ and  we denote by $\alpha'=(\alpha'_1,\ldots,\alpha'_s)$ the dual partition defined by $\alpha_i'=\max\{j:\alpha_j\ge i\}$. We write $(1^r)$ for the partition $(1,\ldots,1)$ ($r$ ones). We write $\mu\subset \lambda$ if $\mu_i\le \lambda_i$ for every $i$.

We denote by $\mathcal{S}=\mathcal{S}(k)$  the category of all triples
$(N',N'',f)$, where $f:N'\to N''$ is a~monomorphism.
The morphisms in $\mathcal{S}$ are defined in the natural way. Note that an~object
$(N_{\alpha},N_{\beta},f)$ gives a~short exact sequence
$$
0\to N_\alpha\mapr{f}{} N_\beta \mapr{}{} N_\gamma \to 0,
$$
where $\gamma=\type(\Coker f)$.

Let $\mathcal{S}_r(k)$ denotes
the full subcategory of $\mathcal{S}(k)$ consisting of all triples isomorphic to $(N_{\alpha}, N_{\beta}, f)$ with $\alpha_1\leq r$.
Note that the category $\mathcal{S}_0$ is equivalent to $\mathcal{N}$.
The categories $\mathcal{S}_r$ for $r\geq 3$ have wild representation type, see \cite{simson}.
It means that it is hopeless to find a~nice parametrization of isomorphism classes of
objects in $\mathcal{S}_r$, for $r\geq 3$.
In the paper we work in the category $\mathcal{S}_2$ that  has a~discrete representation type,
i.e. for any integer $m$ there is only finitely many
(up to isomorphism) objects in $\mathcal{S}_2$ with $k$-dimension $m$.

If the nilpotency degree of the ambient space is bounded by $w$, then an object of $\mathcal{S}_r(k)$ ($r\le w$) can be treated  as a right module over the algebra
$$
\Lambda_{r,w}(k)=\left[\begin{array}{cc}k[x]/(x^r)&k[x]/(x^r)\\0&k[x]/(x^w)\end{array}\right].
$$

\begin{lem}\label{lem_classification}
The algebra $\Lambda_{2,w}(k)$ is representation finite
 (i.e. there is only finitely many isomorphism classes of
 indecomposable $\Lambda_{2,w}(k)$-modules) and every indecomposable $\Lambda_{2,w}(k)$-module is isomorphic to one from the following list.

\begin{enumerate}
\item[(1)]  $P_0^m(k)=(0,N_{(m)}(k),0)$, $1\le m\le w$,
\item[(2)] $P_1^m(k)=(N_{(1)}(k),N_{(m)}(k),\iota)$, $1\le m\le w$,   $\iota(1)=x^{m-1}$,
\item[(3)] $P_2^m(k)=(N_{(2)}(k),N_{(m)}(k),\iota)$, $2\le m\le w$,   $\iota(1)=x^{m-2}$,
\item[(4)] $B_2^{m',m}(k)=(N_{(2)}(k),N_{(m')}(k)\oplus N_{(m)}(k),\iota)$, $3\le m+2\le m'\le w$,  $\iota(1)=\left[\begin{array}{c}x^{m'-2}\\x^{m-1}\end{array}\right]$,
\item[(5)] $P_1^0(k)=(N_{(1)}(k),0,0)$,
\item[(6)] $P_2^0(k)=(N_{(2)}(k),0,0)$,
\item[(7)] $P_2^1(k)=(N_{(2)}(k),N_{(1)}(k),\iota)$, $\iota(1)=1$,
\item[(8)] $Z_2^m(k)=(N_{(2)}(k),N_{(m)},\kappa)$, $\kappa(1)=x^{m-1}$, $1\le m\le w$.
\end{enumerate}
\end{lem}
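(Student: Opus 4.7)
The plan is to classify the indecomposable $\Lambda_{2,w}(k)$-modules by direct module-theoretic analysis, treating each module as a triple $M=(N',N'',f)$ with $N'$ a $k[x]/(x^2)$-module, $N''$ a $k[x]/(x^w)$-module, and $f\colon N'\to N''$ a $k[x]$-linear map. The overall strategy is to repeatedly apply Lemma~\ref{lem_summand} to extract cyclic direct summands of $N''$ compatible with the map $f$, splitting off indecomposable summands from the list one at a time.

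First I would strip off the obvious summands. A cyclic summand of $N''$ disjoint from $\Im f$ gives $P_0^m$, and a cyclic summand of $N'$ contained in $\ker f$ gives $P_1^0$ or $P_2^0$. After these reductions we may assume $f\neq 0$, $N'\neq 0$, and every cyclic summand of $N''$ meets $\Im f$. Next I would show that indecomposability then forces $N'$ to be cyclic, so $N'\cong N_{(1)}$ or $N'\cong N_{(2)}$. For example, given two $N_{(1)}$-summands of $N'$ generated by $u_1,u_2$, a change of basis $u_2\mapsto u_2-cu_1$ combined with Lemma~\ref{lem_summand} applied to $f(u_1)$ places $f(u_2)$ into a complement of the extracted cyclic summand of $N''$, yielding a nontrivial decomposition of $M$; the $N_{(2)}^2$ and mixed $N_{(2)}\oplus N_{(1)}$ cases are handled by analogous base-change arguments.

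With $N'$ cyclic and $v$ a generator, set $y=f(v)$. If $N'=N_{(1)}$ then $y\in\soc N''$ and Lemma~\ref{lem_summand} identifies $M\cong P_1^m$. If $N'=N_{(2)}$ the case analysis branches on whether $xy$ vanishes and on the position of $y$ relative to the cyclic summand of $N''$ produced by Lemma~\ref{lem_summand} applied to $xy$ (or to $y$, when $xy=0$). This yields $P_2^m$, $P_2^1$, $Z_2^m$, or $B_2^{m',m}$ according to the situation; in particular $B_2^{m',m}$ arises exactly when $y$ has a nontrivial projection onto two distinct cyclic summands of $N''$.

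The main obstacle is the $B_2^{m',m}$ case: one must verify that the gap condition $m+2\le m'$ is precisely the threshold for indecomposability. When $m'\in\{m,m+1\}$, an explicit substitution of the generator of the $N_{(m)}$-summand absorbs it into the $N_{(m')}$-summand, producing $B_2^{m',m}\cong P_2^{m'}\oplus P_0^m$; for $m'\ge m+2$ the analogous substitution fails because of incompatible divisibility constraints in $k[x]/(x^{m'})$ versus $k[x]/(x^m)$, so the module cannot be split. The finiteness of the list, and hence the representation-finiteness of $\Lambda_{2,w}(k)$, then follows since each of the eight families is parametrised by at most the pair $(m',m)$ bounded by $w$.
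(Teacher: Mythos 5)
Your route differs significantly from the paper's. The paper's proof is short because it splits into two cases on whether $f$ is injective: when $f$ is a monomorphism it simply cites the classification of indecomposable objects of $\mathcal{S}_2$ due to Beers, Hunter and Walker (the reference \cite{bhw}) to obtain families (1)--(4), and it does an elementary direct analysis, using Lemma~\ref{lem_summand} twice, only in the non-injective case to land on $P_1^0$, $P_2^0$, or $Z_2^m$. You instead propose to re-derive the entire classification, including the monomorphism case, by repeated extraction of cyclic summands. That is a legitimate and more self-contained plan, but as written it contains gaps precisely at the points that \cite{bhw} takes care of.

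Concretely: (i) Your argument that indecomposability forces $N'$ cyclic is sketched only for two $N_{(1)}$-summands, where $f(u_1),f(u_2)$ lie in $\soc N''$ and Lemma~\ref{lem_summand} applies directly. When $N'$ has an $N_{(2)}$-summand generated by $u$ with $f$ injective, the element $f(u)$ has order $x^2$ and does not lie in the socle, so Lemma~\ref{lem_summand} applies only to $f(xu)$; untangling the component of $f(u)$ transverse to the extracted summand of $N''$ is nontrivial and is not addressed. (ii) Even with $N'\cong N_{(2)}$, you must show that after a change of basis of $N''$ the image $f(1)$ has nonzero component in at most \emph{two} cyclic direct summands of $N''$; otherwise there would be indecomposables missing from the list. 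This normalization step is never mentioned. (iii) For $B_2^{m',m}$ with $m'\ge m+2$ you argue that one particular basis substitution fails; that shows a single splitting attempt does not work, but it is not a proof of indecomposability --- one must rule out all direct decompositions, e.g.\ by computing that $\End(B_2^{m',m})$ is local. Until these three points are filled in, the proposal does not constitute a complete proof; the quickest repair is exactly what the paper does, namely invoke \cite{bhw} for the monomorphism case and reserve the direct argument for the non-injective case.
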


\begin{proof}
Let $V=(V',V'',f)$ be an indecomposable $\Lambda_{2,w}(k)$-module. Given an element $w$ of $V'$ we denote by $\langle w\rangle$ the $k[x]/(x^2)$-submodule of $V'$ generated by $w$. If $f$ is a monomorphism, then $V$ is isomorphic to a module of one of the types (1)-(4) by \cite{bhw}. Suppose otherwise and let $f(v)=0$ for some $0\neq v\in V'$. If $xv\neq 0$, then the submodule $\langle v\rangle$ of $V'$ is isomorphic to $k[x]/(x^2)$, thus by Lemma \ref{lem_summand} (Let us recall  that $x^2V'=0$.), it is a direct summand of $V'$. It follows that $P_2^0(k)$ is a direct summand of $V$, hence $V\cong P^0_2(k)$. If $xv=0$ then $v\in\soc V'$. If $v$ is not divisible by $x$, then the submodule $\langle v\rangle$ of $V'$ is a direct summand of $V'$ by Lemma \ref{lem_summand} and it is  isomorphic to $k[x]/(x)$. Then  $V$ has a direct summand isomorphic to $P_1^0(k)$, hence $V$ is isomorphic to $P^0_1(k)$.  Otherwise, let $xv'=v$; then the submodule $\langle v'\rangle$ of $V'$ is isomorphic to $k[x]/(x^2)$. Moreover, $f(v')\in\soc V''$, thus, again Lemma \ref{lem_summand}, $f(v')$ is an element of the socle of a direct summand of $V''$ isomorphic to $k[x]/(x^m)$, where $m$ is the maximal number such that $x^{m-1}$ divides $f(v')$. Then $V$ has a direct summand isomorphic to $Z_2^m(k)$, hence $V\cong Z_2^m(k)$.
\end{proof}

The objects $(N_\alpha,N_\beta,f)$ of $\mathcal{S}_2(k)$  with $\beta_1\leq n$  are the $\Lambda_{2,n}(k)$-modules having no direct summands isomorphic to $P_1^0(k)$, $P_2^0(k)$, $P_2^1(k)$ or $Z_2^m(k)$ ($m\leq n$).

Observe that $Z_2^1(k)=P_2^1(k)$.

It follows that the isomorphism classes of indecomposable $\Lambda_{2,n}(k)$-modules are in a 1-1 correspondence with a finite set
\begin{equation}
\begin{split}
 I_{2,n}=\{P_0^m:1\le m\le n\}\cup \{P_1^m:1\le m\le n\}\cup\{P_2^m:2\le m\le n\}\cup  \\ \cup \{B_2^{m',m}:3\le m+2\le m'\le n\}\cup\{P_1^0,P_2^0\}\cup \{Z_2^m:1\leq m\leq n\}
\end{split}
\label{eq-I2n-set}
\end{equation}
 which is independent on the field $k$. We denote by  $X(k)$ the module corresponding to $X\in I_{2,n}$. Thus, for any function $a:I_{2,n}\rightarrow \NN_0$ we can associate the $\Lambda_{2,n}(k)$-module $\bigoplus_{X\in I_{2,n}}X(k)^{a(X)}$, which we shall denote by $a(k)$. It is convenient to write $\bigoplus_{X\in I_{2,n}}X^{a(X)}$ instead of $a$ and call it a $\Lambda_{2,n}$-module.\footnote{More precisely, we can define a $\mathbb{Z}$-algebra $\Lambda_{2,n}$ and $\Lambda_{2,n}$-modules $P_1^m$ etc. in such a~way that $P_1^m(k)=P_1^m\otimes_{\mathbb{Z}}k$. }

 Given right modules $X,Y,Z$ over a finite dimensional algebra $A$ over a finite field $k$ we set
$$
F_{X,Z}^{Y}=|\{U\subseteq Y:U \mbox{ is a submodule of } Y\; ,\;\; U\simeq X \mbox{ and } Y/U\simeq Z\}|.
$$

The main aim of the paper is the following theorem.

\begin{thm}
 \label{thm-main}
 Let $a,b,c:I_{2,n}\rightarrow\NN_0$. There exists $\varphi_{a,c}^b\in \mathbb{Z}[t]$
such that for any finite field $k$
 $$F_{X_a(k),X_c(k)}^{X_b(k)}=\varphi_{a,c}^b(q),$$
 where $q=|k|$.
\end{thm}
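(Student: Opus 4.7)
The number $F^{X_b(k)}_{X_a(k),X_c(k)}$ is the structure constant of the Ringel--Hall algebra $H(\Lambda_{2,n}(k))$, expressing $u_{X_a(k)}\cdot u_{X_c(k)}$ as a $\ZZ$-linear combination of basis vectors $u_{X_b(k)}$. Since the indexing set $I_{2,n}$ is independent of $k$, a \emph{uniform} polynomial expression for these structure constants, valid for every finite field, will imply the existence of $\varphi_{a,c}^b\in\mathbb{Z}[t]$. The plan is to establish such uniformity by rewriting each product $u_{X_a(k)}\cdot u_{X_c(k)}$ into a $\mathbb{Z}[q]$-linear combination of basis vectors, using the relations collected in Section~\ref{sec-relations}.

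First I would decompose $X_a(k)$ and $X_c(k)$ into their indecomposable summands, reducing to products of basis vectors $u_{Y(k)}$ with $Y\in I_{2,n}$, at the cost of a polynomial factor coming from orders of automorphism groups of the $Y(k)$; these orders are themselves polynomials in $q$. Fixing a total order on $I_{2,n}$ refining $k$-dimension, I would iteratively apply the relations from Section~\ref{sec-relations} to rewrite such a product into a normal form in which the factors appear in the chosen order. Each application replaces a product $u_{Y}\cdot u_{Y'}$ by a $\mathbb{Z}[q]$-linear combination of analogous products in which a well-founded invariant strictly decreases, so termination follows.

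The atomic inputs to this rewriting are of two kinds. Products involving only modules of type $P_0^m$, which form a subcategory equivalent to $\mathcal{N}(k)$, are controlled by the classical Hall polynomials of Steinitz recalled in Section~\ref{section_classical}. Mixed products, in which a monomorphic indecomposable $P_1^m$, $P_2^m$ or $B_2^{m',m}$ interacts with a non-monomorphic $P_1^0$, $P_2^0$ or $Z_2^m$, are governed by the auxiliary polynomials in $q$ constructed in Section~\ref{section_classical} from Kostka numbers, which provide the polynomial interface between $H(\Lambda_{2,n}(k))$ and the classical Hall algebra of $\mathcal{N}(k)$.

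The principal obstacle is the crossing between the monomorphic and non-monomorphic parts of $I_{2,n}$: a submodule $U\subseteq X_b(k)$ with $X_b(k)/U\cong X_c(k)$ need not respect the monomorphic/non-monomorphic dichotomy, even when both $X_a(k)$ and $X_c(k)$ are chosen monomorphic, so the enumeration genuinely mixes the two classes. Establishing enough relations in Section~\ref{sec-relations} to tame this interaction, and checking that the rewriting terminates with polynomial coefficients, is the heart of the argument; the final assembly is then routine and occupies Section~\ref{sec-hall-poly}.
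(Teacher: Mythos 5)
Your proposal misses the key structural reduction that makes the proof tractable: the paper invokes Nasr-Isfahani's theorem (quoted at the end of Section~\ref{sec-hall-poly}), which states that a representation-finite algebra has Hall polynomials as soon as $\varphi^X_{Y,Z}$ exists for all $X,Z$ and all \emph{indecomposable} $Y$. Accordingly, the paper defines the ``polynomial zone'' $\PP_w$ of submodule-types $a$ for which $\varphi^b_{a,c}$ exists for all $b,c$, and the entire content of Section~\ref{sec-hall-poly} is Lemma~\ref{lem_indecomposable}: every indecomposable lies in $\PP_w$. Your plan of working with arbitrary $X_a(k)$ has no such reduction and therefore faces a substantially harder problem, with no indication of how to close the induction.

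Your first reduction step is also not correct as stated. Decomposing $X_a = Y_1\oplus\cdots\oplus Y_m$ does \emph{not} let you replace $u_{X_a}$ by $u_{Y_1}\cdots u_{Y_m}$ ``at the cost of a polynomial factor coming from automorphism groups'': in $\mathcal{H}(\Lambda_{2,w}(k))$ the product $u_{Y_1}\cdots u_{Y_m}=\sum_{[T]}F^T_{Y_1,\ldots,Y_m}u_T$ involves every module $T$ admitting a filtration with those subquotients, not just $T=X_a$. Making sense of a ``leading-term'' version of this is exactly the content of the inductive machinery (Lemma~\ref{lem_inductive} plus the explicit relations in Lemma~\ref{lem-reduction-forms}), and the paper applies it only to the indecomposables listed in Lemma~\ref{lem_classification}, one by one, in a carefully chosen order.

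Finally, the role you assign to the Kostka-number computations of Section~\ref{section_classical} is off. Proposition~\ref{prop_Justyna} is not an interface for ``mixed products'' of monomorphic and non-monomorphic objects; it is a relation purely inside the classical Hall algebra, expressing $\sum_{\mu\in\CP_n}u_\mu$ as a monic-in-$q$ multiple of $u_{(1^n)}$ plus lower terms. It is used (in Lemma~\ref{lem_semisimple} and Lemma~\ref{lem_P0n}) to bootstrap the semisimple powers $(P^1_0)^m$ and then all $P^n_0$ into the polynomial zone. Interaction between the monomorphic and non-monomorphic parts is handled separately by the ad hoc relations (1)--(7) of Lemma~\ref{lem-reduction-forms}, not by the Kostka computation.
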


The polynomials $\varphi_{a,c}^b$ are called {\bf Hall polynomials}.

\section{Generalities on Ringel-Hall algebras}\label{sec-hall-alg}
Let $k$ be a~finite field,  $A$ be a~finite dimensional $k$-algebra and let $X_1,\ldots,X_t$, $Y$ be (right) $A$-modules. Let $F^Y_{X_1,\ldots,X_t}$ be the number of filtrations
$$
   0=U_0\subseteq U_{1}\subseteq\ldots\subseteq U_{t-1}\subseteq U_t=Y
$$
of $Y$ such that $U_{i}/U_{i-1}\simeq X_i$
for all $i=1,\ldots,t$. Note that $F_{X,Z}^Y$
is the number of submodules $U$ in $Y$ with
$U\simeq X$ and $Y/U\simeq Z$.

Following \cite{ringel} we define the Ringel-Hall algebra $\mathcal{H}(A)$ to be the $\mathbb{C}$-vector space with basis $\{u_X\}_{[X]}$ indexed
by the ismomorphism classes $[X]$ of finite dimensional $A$-modules and with multiplication
given by the following formula
$$
u_Xu_Z=\sum_{[Y]}F_{X,Z}^Yu_Y.
$$

It is well-know that $\mathcal{H}(A)$ is an~associative algebra with $1=u_0$ and
\begin{equation}\label{eq-trzy}
u_{X_1}u_{X_2}u_{X_3}=\sum_{[T]}F_{X_1,X_2,X_3}^Tu_T,
\end{equation}
where $[T]$ runs through the set of the isomorphism classes of finite dimensional $A$-modules, see \cite[Proposition 1]{ringel}.

Note that
\begin{equation}
 \sum_{[T]}F_{X_1,T}^YF_{X_2,X_3}^T=  F_{X_1,X_2,X_3}^Y=\sum_{[T]}F_{X_1,X_2}^TF_{T,X_3}^Y,
\label{eq-assoc-F}
\end{equation}
see \cite[page 442]{ringel}.

More generally,
\begin{equation}\label{eq-wiele}
u_{X_1}\ldots u_{X_t}=\sum_{[T]}F_{X_1,\ldots,X_t}^Tu_T
\end{equation}
and
\begin{equation}\label{eq-assoc-FF}
F_{X_1,\ldots,X_t}^T=\sum_{[Y_1],\ldots,[Y_{t-1}]} F_{X_1,Y_2}^{Y_1}F_{X_2,Y_3}^{X_2}\ldots F_{X_{t-1},X_t}^{Y_{t-1}},
\end{equation}
 where $[Y_1],\ldots,[Y_{t-1}]$ run through the set of the isomorphism classes of finite dimensional $A$-modules, see \cite[Remark on page 4]{peng}.

The following lemma is a~version of \cite[Lemma 2.1]{nasr-isf}.

\begin{lem}\label{lem-reduction}
 Let $X,X_1,X_2,X_3,X_4,Y,Z$ be $A$-modules
 (possibly equal to $0$),
 $a,b\in \mathbb{C}$
 and assume that
 $u_X=au_{X_1}u_{X_2}+bu_{X_3}u_{X_4}$
 in $\mathcal{H}(A)$ Then
   $$F_{X,Z}^Y=aF_{X_1,X_2,Z}^Y+bF_{X_3,X_4,Z}^Y$$
\end{lem}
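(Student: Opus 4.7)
The plan is to multiply the given identity $u_X = au_{X_1}u_{X_2} + bu_{X_3}u_{X_4}$ on the right by $u_Z$ in the Ringel-Hall algebra $\mathcal{H}(A)$, and then compare coefficients in the basis $\{u_T\}_{[T]}$. Since $\mathcal{H}(A)$ is an associative $\mathbb{C}$-algebra, this multiplication is legitimate and distributes across the linear combination.

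First I would expand the left-hand side using the definition of multiplication in $\mathcal{H}(A)$, obtaining
$$u_X u_Z = \sum_{[Y]} F_{X,Z}^Y u_Y.$$
Next I would expand each of the triple products on the right-hand side by means of formula (\ref{eq-trzy}), which gives
$$u_{X_1} u_{X_2} u_Z = \sum_{[Y]} F_{X_1,X_2,Z}^Y u_Y \qquad \text{and} \qquad u_{X_3} u_{X_4} u_Z = \sum_{[Y]} F_{X_3,X_4,Z}^Y u_Y.$$
Combining these, the right-hand side of the identity multiplied by $u_Z$ becomes
$$\sum_{[Y]} \bigl(a F_{X_1,X_2,Z}^Y + b F_{X_3,X_4,Z}^Y\bigr)\, u_Y.$$

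Finally, since $\{u_Y\}_{[Y]}$ is a basis of $\mathcal{H}(A)$, comparing coefficients of $u_Y$ in the two expressions yields the desired equality
$$F_{X,Z}^Y = a F_{X_1,X_2,Z}^Y + b F_{X_3,X_4,Z}^Y.$$
There is no real obstacle here: the argument is purely formal manipulation in the Hall algebra, relying only on associativity, linearity of right multiplication by $u_Z$, and the structure constants provided by formula (\ref{eq-trzy}). The content of the lemma is simply the observation that a linear relation among basis expansions of products can be transported through one more multiplication and read off on the level of filtration counts.
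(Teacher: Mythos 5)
Your proof is correct and follows essentially the same route as the paper: multiply the given identity on the right by $u_Z$, expand each side in the basis $\{u_Y\}_{[Y]}$ using the definition of the Hall product and formula (\ref{eq-trzy}), and compare coefficients of $u_Y$. No substantive difference.
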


\begin{proof}
Let $u_X=au_{X_1}u_{X_2}+bu_{X_3}u_{X_4}$. Then by (\ref{eq-trzy})
\begin{multline*}
 (au_{X_1}u_{X_2}+bu_{X_3}u_{X_4})u_Z =
 au_{X_1}u_{X_2}u_Z+bu_{X_3}u_{X_4}u_Z= \\
  (aF_{X_1,X_2,Z}^Y+bF_{X_3,X_4,Z}^Y)u_Y  +\sum_{[Y']\neq [Y]}(aF_{X_1,X_2,Z}^{Y'}+bF_{X_3,X_4,Z}^{Y'})u_{Y'}.
\end{multline*}
Moreover
$$
u_Xu_Z=F_{X,Z}^Yu_Y+\sum_{[Y']\neq [Y]}F_{X,Z}^{Y'}u_{Y'}.
$$
Comparing these formulae we get
$$
F_{X,Z}^Y=aF_{X_1,X_2,Z}^Y+bF_{X_3,X_4,Z}^Y,
$$
because elements $\{u_{Y}\}_{[Y]}$ form a~$\mathbb{C}$-basis of $\mathcal{H}(A)$.
\end{proof}

Let us denote by $Mon_A(X,Y)$ the set of the monomorphisms $X\to Y$.

\begin{lem}\label{lem_injective}
Assume that $X,Y,Z$ are $A$-modules and $X=X'\oplus I$, where $I$ is an injective module. Then $F_{X,Z}^Y$ is nonzero if and only if $Y\cong Y'\oplus I$ for some module $Y'$ and $F_{X',Z}^{Y'}$ is nonzero.

In this case
$$F_{X,Z}^Y=\frac{|Mon_A(I,Y)|}{|Mon_A(I,X)|}\cdot F_{X',Z}^{Y'}.$$
\end{lem}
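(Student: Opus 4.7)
The plan is to prove the lemma by a double-counting argument on pairs $(V,U)$ of submodules of $Y$. First I would settle the iff statement. If $F_{X,Z}^Y\neq 0$, pick $U\subseteq Y$ with $U\cong X=X'\oplus I$ and $Y/U\cong Z$; restricting the isomorphism $X\cong U$ to the summand $I$ yields a monomorphism $I\hookrightarrow Y$, whose image is injective and hence splits off $Y$, giving $Y\cong Y'\oplus I$ for some $Y'$. Conversely, given $Y\cong Y'\oplus I$ and $U'\subseteq Y'$ with $U'\cong X'$ and $Y'/U'\cong Z$, the submodule $U'\oplus I\subseteq Y'\oplus I=Y$ satisfies $U'\oplus I\cong X$ and $Y/(U'\oplus I)\cong Z$, so $F_{X,Z}^Y\neq 0$.

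For the quantitative formula I introduce
$$\Omega=\{(V,U)\ :\ V\subseteq U\subseteq Y,\ V\cong I,\ U/V\cong X',\ Y/U\cong Z\}$$
and compute $|\Omega|$ in two ways. Fixing $U$ first, take any $U\subseteq Y$ with $U\cong X$ and $Y/U\cong Z$. Every $V\subseteq U$ with $V\cong I$ is a direct summand of $U$ by injectivity of $I$, and Krull--Schmidt cancellation (valid because $A$ is finite-dimensional over $k$) forces the complementary summand to be isomorphic to $X'$, so the condition $U/V\cong X'$ is automatic. Thus the number of admissible $V$ inside $U$ equals the number of submodules of $U$ isomorphic to $I$, namely $|Mon_A(I,U)|/|\Aut_A(I)|=|Mon_A(I,X)|/|\Aut_A(I)|$, and summing over $U$ yields
$$|\Omega|=F_{X,Z}^Y\cdot\frac{|Mon_A(I,X)|}{|\Aut_A(I)|}.$$

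Fixing $V$ first, the count is zero unless $Y\cong Y'\oplus I$, which by the first paragraph is the only case needing treatment. Every $V\subseteq Y$ with $V\cong I$ is a direct summand of $Y$, and Krull--Schmidt gives $Y/V\cong Y'$. The map $U\mapsto U/V$ is a bijection between admissible $U$'s containing $V$ and submodules $\overline U\subseteq Y/V\cong Y'$ with $\overline U\cong X'$ and $(Y/V)/\overline U\cong Z$, while $U\cong V\oplus U/V\cong I\oplus X'=X$ automatically by injectivity of $V$. The contribution at each fixed $V$ is therefore $F_{X',Z}^{Y'}$, and the total number of $V\subseteq Y$ with $V\cong I$ is $|Mon_A(I,Y)|/|\Aut_A(I)|$, so
$$|\Omega|=\frac{|Mon_A(I,Y)|}{|\Aut_A(I)|}\cdot F_{X',Z}^{Y'}.$$
Equating the two expressions and cancelling $|\Aut_A(I)|$ yields the claimed formula. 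The only delicate point is the repeated use of Krull--Schmidt cancellation to pin down the isomorphism type of the complementary summand; once this is in place the rest is purely combinatorial bookkeeping enabled by injectivity of $I$.
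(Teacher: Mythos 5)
Your argument is correct and is in essence the paper's own proof made explicit: the set $\Omega$ you double-count is exactly the set of filtrations $V\subseteq U\subseteq Y$ counted by $F_{I,X',Z}^{Y}$, and your two counts are precisely the two sides of the associativity identity (\ref{eq-assoc-F}) with $X_1=I$, $X_2=X'$, $X_3=Z$, which the paper invokes directly and then collapses using injectivity of $I$ and the formula $F_{I,U}^{I\oplus U}=|Mon_A(I,I\oplus U)|/|\Aut(I)|$. So the underlying idea is the same; you have simply unpacked the Hall-number associativity into an explicit filtration count and spelled out the Krull--Schmidt cancellations that the paper leaves implicit.
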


\begin{proof}  The first assertion follows easily, because $I$ is
 injective. For the proof of the formula note that by (\ref{eq-assoc-F}) applied to $X_1=I$, $X_2=X'$, $X_3=Z$ we get
 $$
 \sum_{[T]}F^{Y}_{I,T}F^T_{X',Z}=\sum_{[U]}F^U_{I,X'}F_{U,Z}^Y
 $$
 and, as $I$ is injective, the equality reduces to
 $$
 F^{Y}_{I,Y'}F^{Y'}_{X',Z}=F^{X}_{I,X'}F_{X,Z}^Y.
 $$
 Since $F^{I\oplus U}_{I,U}=\frac{|Mon_A(I,I\oplus U)|}{|Aut(I)|}$ for injective $I$ and arbitrary $U$, the lemma follows.
\end{proof}

\section{Some relations in the Steinitz's classical Hall algebra}\label{section_classical}

Given a finite field $k$ with $|k|=q$ we denote by $\mathcal{H}(q)$ the (classical) Hall algebra defined as the free $\mathbb{C}$-vector space with the basis $u_{\lambda}$, where $\lambda$ runs through the set of all partitions, equipped with the multiplication defined by the formula
$$
u_{\mu}u_{\nu}=\sum_{\lambda}F_{\mu,\nu}^{\lambda}(q)u_{\lambda},
$$
where $F_{\mu,\nu}^{\lambda}(q)$ is the number of the submodules $U$ of $N(\lambda)$ which are isomorphic to $N(\mu)$ and $N(\lambda)/U\cong N(\nu)$. The classical results mentioned in the Introduction asserts that $F_{\mu,\nu}^{\lambda}(q)$ are polynomials in $q$. In what follows we shall use some more precise properties, which we list in the following proposition.

\begin{prop}\label{prop_wstep}
\begin{enumerate}
\item[(a)] If $F_{\mu,\nu}^{\lambda}(q)\neq 0$ then $\mu,\nu\subseteq \lambda$, $|\lambda|=|\mu|+|\nu|$ and  $F_{\mu,\nu}^{\lambda}$ is a polynomial in $q$ of degree ${\bf n}(\lambda)-{\bf n}(\mu)-{\bf n}(\nu)$.
\item[(b)] $\mathcal{H}(q)$ is generated as a $\mathbb{C}$-vector space by the elements $u_{(1^{l_1})}\ldots u_{(1^{l_r})}$, where $r\in\NN$ and $l_1,\ldots,l_r\in\NN$, and those elements are linearly independent.
\end{enumerate}
\end{prop}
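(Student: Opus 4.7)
The plan is to deduce both assertions from the classical theory of the Hall algebra $\mathcal{H}(q)$ developed by Steinitz, Hall and Macdonald; see \cite{macd}. For part (a), if $F_{\mu,\nu}^\lambda(q)\ne 0$, there is a short exact sequence
$$
0\to N_\mu(k)\to N_\lambda(k)\to N_\nu(k)\to 0,
$$
so comparing $k$-dimensions gives $|\lambda|=|\mu|+|\nu|$. To obtain $\mu\subseteq\lambda$ I would argue via the rank filtration: writing $U\subseteq N_\lambda(k)$ for a submodule isomorphic to $N_\mu(k)$, one has $\dim_k x^{i-1}U\le\dim_k x^{i-1}N_\lambda(k)$ for every $i\ge 1$, which expressed in terms of partition parts yields $\sum_{j\ge i}\mu_j\le\sum_{j\ge i}\lambda_j$; together with $|\mu|\le|\lambda|$ these inequalities force $\mu\subseteq\lambda$, and analogously $\nu\subseteq\lambda$ by the rank argument applied to the quotient. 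The degree statement is then the content of Macdonald's computation \cite[II.(4.3)]{macd}: one stratifies the variety of submodules of $N_\lambda(k)$ of type $\mu$ with quotient of type $\nu$ by isomorphism type of an associated flag, identifies the generic stratum, and reads off the leading monomial in $q$.

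For part (b), the first ingredient is the classical commutativity of $\mathcal{H}(q)$ (Steinitz). To prove spanning I would proceed by downward induction on the dominance order of $\lambda$. The basic computation is
$$
u_{(1^l)}u_\mu=\sum_\lambda F_{(1^l),\mu}^\lambda(q)\,u_\lambda,
$$
and the condition $F_{(1^l),\mu}^\lambda(q)\ne 0$ forces $\lambda/\mu$ to be a vertical $l$-strip (since a submodule of type $(1^l)$ must lie in $\soc N_\lambda(k)$). Among these $\lambda$, the one obtained by adding a box to each of the first $l$ parts of $\mu$ (padded with zeros when necessary) is maximal in dominance and appears with a nonzero coefficient; iterating, every basis vector $u_\lambda$ is expressible as a $\mathbb{C}$-linear combination of ordered products $u_{(1^{l_1})}\cdots u_{(1^{l_r})}$ with $l_1\ge\ldots\ge l_r$. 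Linear independence of these ordered products follows from the Steinitz--Hall identification of $\mathcal{H}(q)$ with the ring of symmetric functions over $\mathbb{C}$, under which $u_{(1^l)}$ corresponds (up to a normalising power of $q$) to the elementary symmetric function $e_l$; the fundamental theorem of symmetric functions then supplies the algebraic independence of the $e_l$ and hence the desired linear independence.

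The most delicate step is the degree statement in (a), which rests on Macdonald's precise analysis of the leading coefficient of $F_{\mu,\nu}^\lambda(q)$ and its interpretation via a Littlewood--Richardson number; I would simply quote this from \cite{macd} rather than reprove it. Part (b) is then formal once the Steinitz--Hall framework is in place and commutativity has been invoked.
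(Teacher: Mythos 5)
The paper's own proof is a bare citation to Macdonald II.(4.3) and (2.3), so an expanded argument is in principle welcome; however, your deduction of $\mu\subseteq\lambda$ in part (a) has a genuine gap. There is first a small computational slip: $\dim_k x^{i-1}N_\mu(k)=\sum_{l\ge i}\mu'_l$ (the number of boxes of $\mu$ in columns $\ge i$), not $\sum_{j\ge i}\mu_j$. More seriously, even in the corrected form the inequalities $\sum_{l\ge i}\mu'_l\le\sum_{l\ge i}\lambda'_l$ for all $i$, together with $|\mu|\le|\lambda|$, do \emph{not} imply $\mu\subseteq\lambda$. For example $\mu=(2,2)$ and $\lambda=(3,1,1)$ satisfy all these inequalities ($\mu'=(2,2)$, $\lambda'=(3,1,1)$, tail sums $4\le5$, $2\le2$, $0\le1$, and $|\mu|=4\le 5$), yet $\mu_2=2>1=\lambda_2$, and indeed $N_{(3,1,1)}(k)$ has no submodule of type $(2,2)$. (For the version you literally wrote, $\mu=(2,2,2)$, $\lambda=(3,3,1,1)$ is a counterexample.) A filtration argument that does work uses the intersection of the socle with the rank filtration: for a submodule $U\subseteq N_\lambda(k)$ of type $\mu$ one has $\soc U\cap x^{r-1}U\subseteq\soc N_\lambda(k)\cap x^{r-1}N_\lambda(k)$, and the two sides have dimensions $\mu'_r$ and $\lambda'_r$ respectively, so $\mu'_r\le\lambda'_r$ for every $r$, which is exactly $\mu\subseteq\lambda$; the statement for $\nu$ then follows by self-duality of $N_\lambda(k)$. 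Alternatively one can simply quote Macdonald here as well, as you already do for the degree. Your treatment of the degree formula and of part (b) (triangularity with respect to dominance plus the symmetric-function identification for linear independence) is sound and in line with the source the paper cites.
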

\begin{proof}
This is Chapter II, (4.3) and (2.3) in \cite{macd}.
\end{proof}

For sake of simplicity of further formulations let us define $\mathcal{F}_0$ as the free  associative $\ZZ[t]$-algebra generated by $u_{\lambda}$ as free commutative generators, where $\lambda$ run through all partitions. Given $R\in\mathcal{F}_0$ and a number $q$ we denote by $R(q)$ the element of the $\mathcal{H}(q)$ obtained by evaluating the coefficients of $R$ at $q$.

Let $\CP_n$ denote the set of all partitions of the number $n$. Given a partition $\lambda$ we denote by $v_{\lambda'}$ the product
$$
u_{(1^{\lambda_1'})}\ldots u_{(1^{\lambda_s'})},
$$
where $\lambda'=(\lambda'_1,\ldots,\lambda'_s)$ is the partition dual to $\lambda$.

Recall \cite[I.(6.4)]{macd} that given two partition $\lambda, \mu\in\CP_n$ the Kostka number $K_{\lambda,\mu}$ is defined as the number of tableaux of shape $\lambda$ and weight $\mu$.

\begin{lem}\label{lem_kostka}
The matrix $K=[K_{\lambda,\mu}]_{\lambda, \mu\in\CP_n}$ is invertible and $(K^{-1})_{(n),(1^n)}=(-1)^{n-1}$.
\end{lem}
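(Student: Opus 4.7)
The plan is to establish the two assertions of the lemma separately: invertibility of $K$, and the identification of the single entry $(K^{-1})_{(n),(1^n)}$.

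For invertibility, I would appeal to the classical triangularity of the Kostka matrix with respect to the dominance partial order on partitions: $K_{\lambda,\mu} = 0$ unless $\mu \leq \lambda$, and $K_{\lambda,\lambda} = 1$, since the unique semistandard tableau of shape $\lambda$ with content $\lambda$ has all entries $i$ in row $i$ (Macdonald I.(6.5)). Ordering $\CP_n$ in any way refining dominance (for instance, reverse lexicographic) therefore presents $K$ as unitriangular over $\ZZ$, and in particular invertible.

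For the specific entry $(K^{-1})_{(n),(1^n)}$, the cleanest route is via symmetric-function identities. Inverting the defining relation $s_\lambda = \sum_\mu K_{\lambda,\mu}\,m_\mu$ gives the expansion
$$
m_\mu = \sum_\lambda (K^{-1})_{\mu,\lambda}\, s_\lambda,
$$
so $(K^{-1})_{(n),(1^n)}$ is the coefficient of $s_{(1^n)}$ in $m_{(n)}$. But $m_{(n)} = \sum_i x_i^n$ is the power sum $p_n$, and the classical expansion of $p_n$ in the Schur basis reads
$$
p_n = \sum_{r=0}^{n-1} (-1)^r\, s_{(n-r,\,1^r)},
$$
which follows from $p_\rho = \sum_\lambda \chi^\lambda_\rho s_\lambda$ together with the fact that the character $\chi^\lambda_{(n)}$ of the $n$-cycle vanishes except on hooks, where it equals $(-1)^r$ (Murnaghan--Nakayama; Macdonald I, \S 7). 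The term $r = n-1$ contributes the Schur function $s_{(1^n)}$ with coefficient $(-1)^{n-1}$, which is the claim.

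There is no serious obstacle; the argument is essentially a chain of citations to well-known symmetric-function identities. The only point of care is to pin down the indexing convention for $K^{-1}$, which is unambiguously fixed by the transition formula $m_\mu = \sum_\lambda (K^{-1})_{\mu,\lambda}\, s_\lambda$ and is consistent with the row/column labels appearing in the lemma. Should the authors prefer a self-contained argument, one can instead exploit that the row of $K$ indexed by $(n)$ consists entirely of ones (there is precisely one semistandard tableau of shape $(n)$ with any given content) and combine this observation with the unitriangular structure to derive the entry by induction on $n$; however, the $p_n$-expansion route above is considerably more direct and avoids partition-combinatorial bookkeeping.
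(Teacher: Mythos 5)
Your argument is correct. Note, however, that the paper's ``proof'' of this lemma is purely a pointer to the literature (Duan's paper on the inverse Kostka matrix and Macdonald I.6, Example 4(d)), so there is no in-text argument to compare against. What you supply is a self-contained derivation consistent with what those references contain: unitriangularity with respect to dominance for invertibility, and the identification $m_{(n)} = p_n$ together with the Murnaghan--Nakayama expansion $p_n = \sum_{r=0}^{n-1}(-1)^r s_{(n-r,1^r)}$ for the entry. The one thing to be careful about---and you handle it correctly---is the indexing convention, namely that $(K^{-1})_{\mu,\lambda}$ is the coefficient of $s_\lambda$ in $m_\mu$, forced by inverting $s_\lambda = \sum_\mu K_{\lambda,\mu}m_\mu$. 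Your alternative suggestion (using that the row of $K$ indexed by $(n)$ is all ones, then inducting) would also work but is indeed clumsier. In short: the proposal is a correct and useful expansion of what the paper merely cites.
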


\begin{proof} Can be found in several places. See eg. \cite[Theorem 3]{duan}, also the proof of Lemma 5 there or \cite[I.6]{macd}, Example 4(d), formula (2).
\end{proof}

The aim of this section is the following proposition.
\begin{prop}\label{prop_Justyna} For every $n\in\NN$ there exists $h_n\in\ZZ[t]$ with the leading coefficient $\pm 1$ and $R_n\in\mathcal{F}_0$ which is a linear combination of elements of the form $u_{(1^{l_1})}\ldots  u_{(1^{l_r})}$, where $r>1$ (consequently $l_i<n$ for every $i=1,\ldots,r$) such that for every finite field $k$
$$
\sum_{\mu\in\CP_n}u_{\mu}=h_n(q)u_{(1^n)}+R_n(q)
$$
in $\mathcal{H}(q)$.
\end{prop}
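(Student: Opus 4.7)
The plan is to invert the change of basis between $\{u_\mu\}_{\mu\in\CP_n}$ and $\{v_{\lambda'}\}_{\lambda\in\CP_n}$ on the degree-$n$ part of $\mathcal{H}(q)$; both are bases by Proposition \ref{prop_wstep}(b). Define the matrix $A(q)$ by $v_{\lambda'}=\sum_\mu A_{\mu\lambda}(q)\,u_\mu$. Then $h_n(q)=\sum_\mu(A(q)^{-1})_{(1^n),\mu}$, because $v_{(1^n)'}=v_{(n)}=u_{(1^n)}$ is the basis vector indexed by $\lambda=(1^n)$, and the remainder $R_n:=\sum_{\lambda\ne(1^n)}\bigl(\sum_\mu(A(q)^{-1})_{\lambda\mu}\bigr)\, u_{(1^{\lambda'_1})}\cdots u_{(1^{\lambda'_s})}$ is automatically a $\ZZ[q]$-combination of products with $r>1$ factors, as required.

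The first step is a structural analysis of $A(q)$. Iterating Proposition \ref{prop_wstep}(a) through \eqref{eq-assoc-FF} with $X_i=N_{(1^{\lambda'_i})}(k)$, and using $n((1^r))=\binom{r}{2}$ together with $n(\lambda)=\sum_i\binom{\lambda'_i}{2}$, one checks that $A_{\mu\lambda}(q)$ is a polynomial in $q$ of degree at most $n(\mu)-n(\lambda)$ whose top coefficient equals the iterated Littlewood--Richardson coefficient $c^\mu_{(1^{\lambda'_1}),\ldots,(1^{\lambda'_s})}$. By the Pieri rule (equivalently, the expansion $e_{\lambda'}=\sum_\mu K_{\mu'\lambda'}s_\mu$), this top coefficient is the Kostka number $K_{\mu'\lambda'}$. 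In particular $A_{\mu\mu}(q)=1$ and, invoking the classical fact that the Hall polynomial $F^\mu_{X_1,\ldots,X_s}(q)$ vanishes identically whenever the corresponding multi-LR coefficient does, $A_{\mu\lambda}(q)=0$ for $\mu\not\le\lambda$ in dominance. Consequently $A(q)$ is upper-unitriangular in dominance order, so $A(q)^{-1}\in\Mat(\ZZ[q])$.

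To isolate the leading term of $h_n(q)$, factor $A(q)=D_q\,\tilde A(q^{-1})\,D_q^{-1}$ with $D_q$ diagonal, $(D_q)_{\mu\mu}=q^{n(\mu)}$, and $\tilde A(t)\in\Mat(\ZZ[t])$ satisfying $\tilde A(0)_{\mu\lambda}=K_{\mu'\lambda'}$. By Lemma \ref{lem_kostka}, $\tilde A(0)$ is invertible with $(\tilde A(0)^{-1})_{\lambda\mu}=(K^{-1})_{\lambda'\mu'}$, so
$$
(A(q)^{-1})_{(1^n),\mu}=q^{\binom{n}{2}-n(\mu)}\bigl[(K^{-1})_{(n),\mu'}+O(q^{-1})\bigr].
$$
The unique $\mu\in\CP_n$ maximising $\binom{n}{2}-n(\mu)$ is $\mu=(n)$ (which gives $n(\mu)=0$), with top contribution $(K^{-1})_{(n),(1^n)}q^{\binom{n}{2}}=(-1)^{n-1}q^{\binom{n}{2}}$ by Lemma \ref{lem_kostka}; all other summands have strictly smaller $q$-degree. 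Hence $h_n(q)=(-1)^{n-1}q^{\binom{n}{2}}+(\text{lower order in }q)$ lies in $\ZZ[q]$ and has leading coefficient $\pm 1$.

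The main obstacle is the identification in the second paragraph: both that $K_{\mu'\lambda'}$ appears precisely as the top-degree coefficient of $A_{\mu\lambda}(q)$ and that $A_{\mu\lambda}(q)=0$ whenever $K_{\mu'\lambda'}=0$. Both are classical consequences of Proposition \ref{prop_wstep}(a) and the symmetric-function machinery of \cite{macd}, but must be carefully assembled through \eqref{eq-assoc-FF}; once granted, the matrix inversion and leading-term extraction via Lemma \ref{lem_kostka} are routine.
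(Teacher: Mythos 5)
Your proposal is correct and follows essentially the same route as the paper: expand $v_{\lambda'}$ in the $u_\mu$-basis, observe the transition matrix is unitriangular with entries of $q$-degree $n(\mu)-n(\lambda)$ and leading coefficient $K_{\mu'\lambda'}$, invert, and read off the leading coefficient $(-1)^{n-1}$ of the $u_{(1^n)}$-term from the inverse Kostka matrix via Lemma~\ref{lem_kostka}. The only stylistic difference is your rescaling $A(q)=D_q\tilde A(q^{-1})D_q^{-1}$, which packages cleanly the degree bookkeeping that the paper carries out more explicitly through its Claims 2 and 3 on the minors of $A$.
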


\begin{proof} Most of the proof is a compilation of well-known facts on the classical Hall algebras.
Let $\lambda \in\CP_n$. Then (see \cite[Chapter II, (2.3)]{macd})
$$
v_{\lambda'}=\sum_{\mu\in\CP_n}a_{\lambda,\mu}u_{\mu},
$$
where $a_{\lambda,\mu}=0$ unless  ${\bf n}(\mu)\ge {\bf n}(\lambda)$ and $a_{\lambda,\lambda}=1$.
Moreover, it follows (see eg. \cite[(2.5)]{Schiff}) that
$$
a_{\lambda,\mu}=\sum_{(\nu_1,\ldots,\nu_{s-1})}F_{(1^{\lambda_1'}),\nu_{s-1}}^{\nu_s}F_{(1^{\lambda_2'}),\nu_{s-2}}^{\nu_{s-1}}\ldots
F_{(1^{\lambda_{s-1}'}),\nu_{1}}^{\nu_2},
$$
where $\nu_1=(1^{\lambda'_s})$, $\nu_s=\mu$ and $(\nu_2,\ldots,\nu_{s-1})$ runs through all sequences of partitions. Clearly, the r.h.s sum is finite. Moreover, by \cite[Example 2.4]{Schiff} and Proposition \ref{prop_wstep} every nonzero summand of this sum is a monic polynomial in $q$ of degree
$$
\sum_{ j=2}^s({\bf n}(\nu_j)-{\bf n}(1^{\lambda_{s-j+1}'})-{\bf n}(\nu_{j-1}))={\bf n}(\nu_s)-{\bf n}{ (\nu_1)}-\sum_{ j=2}^s{\bf n}(1^{\lambda_{s-j+1}'})={\bf n}(\mu)-{\bf n}(\lambda)
$$
and the number of nonzero summands is equal to the number of the tableaux of shape $\mu'$ and weight $\lambda'$ (see \cite[I.1]{macd}), thus it is the Kostka number $K_{\mu',\lambda'}$ (\cite[I.(6.4)]{macd})\footnote{It follows by  \cite[(AZ.4) page 200]{macd}.}.
Let $A$ be the matrix $[a_{\lambda,\mu}]_{\lambda,\mu\in\CP_n}$.

We summarize our observations in the following Claim 1:
\medskip

 $A$ is upper triangular
({ with respect to a~linear ordering of the partitions $\lambda\ge_l\mu$
implying ${\bf n}(\lambda)\ge {\bf n}(\mu)$}), with 1 on the diagonal and the element $a_{\lambda,\mu}$ is a polynomial in $q$ of degree ${\bf n}(\mu)-{\bf n}(\lambda)$ and the leading term $K_{\mu',\lambda'}$.

The matrix $A$ is invertible. Let $A^{-1}=[a'_{\lambda,\mu}]_{\lambda,\mu\in\CP_n}$. The elements $a'_{\lambda,\mu}$ are polynomials in $q$.
\medskip

Claim 2. $\deg(a'_{\lambda,\mu})\le {\bf n}(\mu)-{\bf n}(\lambda)$. In order to prove it, note that $a'_{\lambda,\mu}$ is a linear combination (with integral coefficients) of elements of the form
$$
{\bf a}_{\sigma}=\prod_{\nu}a_{\nu,\sigma(\nu)},
$$
where $\nu$ runs through the partitions in $\CP_n$ different than $\mu$ and $\sigma:\CP_n\setminus\{\mu\}\rightarrow \CP_n\setminus\{\lambda\}$ is a bijection. Thus, if ${\bf a}_{\sigma}\neq 0$,  $$\deg({\bf a}_{\sigma})= \sum_{\nu}\deg(a_{\nu,\sigma(\nu)})={\bf n}(\mu)-{\bf n}(\lambda),$$  the last equality by Claim 1. Thus the Claim 2. follows.
\medskip

Claim 3. $\deg(a'_{(n),(1^n)})={\bf n}(1^n)-{\bf n}((n))=\frac{n(n-1)}{2}>{\bf n}(\mu)-{\bf n}(\lambda)$ whenever $\lambda\neq (n)$ and the leading coefficient of $a'_{(n),(1^n)}$ equals $(-1)^{n-1}$. Indeed, if $\lambda=(n)$, $\mu=(1^n)$, then by Claim 2 every nonzero ${\bf a}_{\sigma}$  has degree ${\bf n}(1^n)-{\bf n}((n))$ and the leading coefficient of $a'_{(n),(1^n)}$ is the element at the position $((n),(1^n))$ of the matrix inverse to $[K_{\mu',\lambda'}]_{\lambda,\mu\in\CP_n}$. Observe that this element is equal to the element at the position $((n),(1^n))$ of the matrix inverse to $[K_{\lambda,\mu}]_{\lambda,\mu\in\CP_n}$,
 hence by Lemma \ref{lem_kostka}  equals to $(-1)^{n-1}$. The claim follows.
\medskip

Now,
\begin{equation}\label{eq_ulambda}
u_{\mu}=\sum_{\lambda}a'_{\mu,\lambda}v_{\lambda'}=a'_{\mu,(1^n)}v_{(1^n)'}+ \sum_{\lambda:\lambda\neq (1^n)}a'_{\mu,\lambda}v_{\lambda'}
\end{equation}
for every $\mu$. Observe that $v_{(1^n)'}=u_{(1^n)}$ and if $\lambda\neq (1^n)$, then $v_{\lambda'}$ is a product of at least two elements of the form $u_{(1^l)}$. Summing up the above equalities over $\mu\in\CP_n$ we obtain what we want, since by the Claims 2. and 3.
all polynomials $a'_{\mu,(1^n)}$, with $\lambda\neq (n)$ have degree strictly less than ${\bf n}(1^n)-{\bf n}((n))$, whereas the degree of $a'_{(n),(1^n)}$ is equal to ${\bf n}(1^n)-{\bf n}((n))$ and its leading coefficient equals $\pm 1$.
\end{proof}

\begin{ex}
The matrix $[a_{\lambda,\mu}]$ for $n=4$ is the following (the rows and columns are indexed by the partitions of $4$ ordered as follows:
$(4)$, $(3,1)$, $(2,2)$, $(2,1,1)$, $(1,1,1,1)$.)
 \small $$
\left[\begin{array}{ccccc}
1&3q+1&2q^2+3q+1&3q^3+5q^2+3q+1&q^6+3q^5+5q^4+6q^3+5q^2+3q+1\\
0&1&q+1&2q^2+q+1&q^5+2q^4+3q^3+3q^2+2q+1\\
0&0&1&q+1&q^4+q^3+2q^2+q+1\\
0&0&0&1&q^3+q^2+q+1\\
0&0&0&0&1
\end{array}\right]
$$
and its inverse is the following:

{\small $$
\left[\begin{array}{ccccc}
1&-3q-1&q^2+q&2q^3-2q^2&-q^6+2q^5+3q^4+3q^3+4q^2+q\\
0&1&-q-1&-q^2+q&q^5-q\\
0&0&1&-q-1&q^3+q\\
0&0&0&1&-q^3-q^2-q-1\\
0&0&0&0&1
\end{array}\right].
$$}

It follows that
$$
\begin{array}{l}
 \sum_{\mu\in\CP_n}u_{\mu}=(-q^6+3q^5+3q^4+3q^3+3q^2)u_{(1^4)}+u_{(1)}^4-3qu_{(1^2)}u_{(1)}^2+\\
 +q^2u^2_{(1^2)}+(2q^3-3q^2-1)u_{(1^3)}u_{(1)}.
 \end{array}
$$
\end{ex}

\begin{lem}\label{lem_relacja2}
$$
\frac{q^n-1}{q-1}u_{(1^n)}=-u_{(2,1^{n-2})}+u_{(1^{n-1})}u_{1}.
$$
\end{lem}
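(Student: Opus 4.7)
The plan is to expand the product $u_{(1^{n-1})}u_{(1)}$ directly in $\mathcal{H}(q)$ by computing the Hall numbers $F^{\lambda}_{(1^{n-1}),(1)}(q)$ for all partitions $\lambda$ of $n$. I would first observe that a submodule $U\subseteq N_\lambda$ counted by $F^{\lambda}_{(1^{n-1}),(1)}$ must satisfy $xU=0$ (because $U\cong N_{(1^{n-1})}$ is semisimple) and $xN_\lambda\subseteq U$ (because $N_\lambda/U\cong N_{(1)}$ is also semisimple). Thus $U$ is a $k$-subspace of $\soc N_\lambda$ of $k$-dimension $n-1$ containing $xN_\lambda$.

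Next, since $\dim_k\soc N_\lambda$ equals the number $\ell(\lambda)$ of parts of $\lambda$, the condition $\dim_k U=n-1\leq \ell(\lambda)$ together with $|\lambda|=n$ leaves only two possibilities: $\lambda=(1^n)$ or $\lambda=(2,1^{n-2})$.

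For $\lambda=(1^n)$ the module $N_\lambda$ is semisimple of $k$-dimension $n$ with $xN_\lambda=0$, so every hyperplane of $N_\lambda=k^n$ is admissible; this gives $F^{(1^n)}_{(1^{n-1}),(1)}(q)=\frac{q^n-1}{q-1}$. For $\lambda=(2,1^{n-2})$ the socle has dimension exactly $n-1$, so $U$ must coincide with $\soc N_\lambda$; since $xN_\lambda$ is one-dimensional and contained in $\soc N_\lambda$, the containment $xN_\lambda\subseteq U$ is automatic, so $F^{(2,1^{n-2})}_{(1^{n-1}),(1)}(q)=1$.

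Substituting these two values into the multiplication formula yields
\[
u_{(1^{n-1})}u_{(1)}=\tfrac{q^n-1}{q-1}\,u_{(1^n)}+u_{(2,1^{n-2})},
\]
and rearranging gives the stated identity. No step is really an obstacle here; the only substantive point is the combinatorial reduction to the two partitions $(1^n)$ and $(2,1^{n-2})$, after which the counts are elementary linear algebra.
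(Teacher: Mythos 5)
Your proof is correct and follows essentially the same route as the paper: identify the only two partitions $\lambda$ of $n$ for which $F^{\lambda}_{(1^{n-1}),(1)}$ can be nonzero, namely $(1^n)$ and $(2,1^{n-2})$, and compute the two Hall numbers directly ($\frac{q^n-1}{q-1}$ and $1$ respectively). The paper phrases this in the language of extensions of $k[x]/(x)$ by $(k[x]/(x))^{n-1}$ rather than via the socle/subspace characterization, but the identification of the two middle terms and the two counts are identical; your more explicit justification (that $U$ must be a hyperplane of $\soc N_\lambda$ containing $xN_\lambda$) is a welcome clarification, not a different method.
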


 \begin{proof}
 The middle term of an extension of $k[x]/(x)$ by $(k[x]/(x))^{n-1}$ is isomorphic either to $(k[x]/(x))^{n}$ or to $k[x]/(x^2)\oplus (k[x]/(x))^{n-2}$. The number of submodules of $(k[x]/(x))^{n}$ isomorphic to $(k[x]/(x))^{n-1}$ equals $\frac{q^n-1}{q-1}$ and there is unique submodule of $k[x]/(x^2)\oplus (k[x]/(x))^{n-2}$ isomorphic to $(k[x]/(x))^{n-1}$. The factor module is isomorphic to $(k[x]/(x))^{n}$ in every case. It follows that
 $$
 u_{(1^{n-1})}u_1=\frac{q^n-1}{q-1}u_{(1^{n-1})}+u_{(2,1^{n-2})}.
 $$
 \end{proof}

\section{Some relations in  Hall algebras of $\Lambda_{2,w}(k)$}\label{sec-relations}

In this section we fix a~finite field $k$ and for any
$X\in I_{2,w}$ we write $X=X(k)$.

\begin{rem}
The category $\mathcal{S}$ (treated as a subcategory of the category of $\Lambda_{2,w}(k)$-modules) is closed under extensions.
\end{rem}

\begin{lem}\label{lem-reduction-forms}
Let $|k|=q$ and $w\ge 2$. The following equalities hold in $\mathcal{H}(\Lambda_{2,w}(k))$:
\begin{enumerate}
 \item $u_{P_1^n}=u_{P_0^n}u_{P_1^0}-u_{P_1^0}u_{P_0^n}$ for $w\ge n\geq 1$;
 \item $u_{P_2^1}=u_{P_0^1}u_{P_2^0}-u_{P_2^0}u_{P_0^1}$;
 \item $u_{Z_2^n}=u_{Z_2^{n-1}}u_{P_0^1}-u_{P_0^1}u_{Z_2^{n-1}}+u_{P_2^n}$ for $w\geq n>2$ and $u_{Z_2^2}=u_{Z_2^{1}}u_{P_0^1}-\frac{1}{q}u_{P_0^1}u_{Z_2^1}+\frac{1}{q}u_{P_2^2}$.
 \item $u_{P_2^n}=u_{P_0^n}u_{P_2^0}-u_{P_2^0}u_{P_0^n}-u_{Z_2^n}$ for $w\ge n\geq 2$;
 \item $u_{P_2^n\oplus P_0^1}=\frac{1}{q}(u_{P_2^n}u_{P_0^1}-u_{P_2^{n+1}})$ for $w>n\geq 3$ and $u_{P_2^w\oplus P_0^1}=\frac{1}{q}u_{P_2^w}u_{P_0^1}$;
 \item $u_{B_2^{n,1}}=\frac{1}{q}(u_{P_1^n}u_{P_1^1}-u_{P_1^1}u_{P_1^n}+u_{P_2^{n+1}})$ for $w>n\geq 3$ and
 $u_{B_2^{w,1}}=\frac{1}{q}(u_{P_1^w}u_{P_1^1}-u_{P_1^1}u_{P_1^n})$;
 \item $u_{P_2^n\oplus P_0^m}=\frac{1}{q^m}u_{P_2^n}u_{P_0^m}-\sum_{s=1}^{\max(w-n,m-1)}\frac{q-1}{q^{s+1}}u_{P_2^{n+s}\oplus P_0^{m-s}}-  \frac{1}{q^m}u_{P_2^{n+m}}$ 
 and
  $q^mu_{B_2^{n,m}}=u_{P_1^n}u_{P_1^m}-u_{P_1^m}u_{P_1^n}+q^{m-2}u_{B_2^{n+1,m-1}}+(q-1)q^{m-1}u_{P_2^{n+1}\oplus P_0^{m-1}}$
 for $n-2\geq m\geq 2$.  We agree that $P_0^0=0$.
\end{enumerate}
\end{lem}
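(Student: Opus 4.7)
Each of the seven identities is an equality in $\mathcal{H}(\Lambda_{2,w}(k))$, so the plan is to expand every product $u_X u_Z$ by the Hall multiplication
$$u_X u_Z = \sum_{[Y]} F_{X,Z}^Y\, u_Y,$$
enumerate all middle terms $Y$ of short exact sequences $0\to X\to Y\to Z\to 0$ using the classification of indecomposable $\Lambda_{2,w}(k)$-modules in Lemma~\ref{lem_classification}, compute each $F_{X,Z}^Y$ by explicitly counting submodules of $Y$ isomorphic to $X$ with quotient $Z$, and then match coefficients of each basis element $u_Y$ on the two sides of the asserted identity.

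For items (1)--(4), one of the factors on the right-hand side is an ``atomic'' module ($P_0^1$, $P_0^n$, $P_1^0$, or $P_2^0$), which severely restricts the vector space structure of any middle term. For example, in (1), an extension $0\to P_0^n\to Y\to P_1^0\to 0$ forces $Y=(N_{(1)},N_{(n)},f)$, and the hom $f\colon k[x]/(x)\to k[x]/(x^n)$ either vanishes (giving $Y = P_1^0\oplus P_0^n$) or maps into the socle (giving $Y=P_1^n$); the $q-1$ nonzero choices of $f$ all land on the same isomorphism class $P_1^n$ with a single submodule realizing it, so $F_{P_0^n,P_1^0}^{P_1^n}=1$. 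The analogous analysis of the opposite order produces only $P_1^0\oplus P_0^n$, and subtraction gives (1). Items (2) and (3) are handled the same way; the extra summand $-u_{Z_2^n}$ in (4) reflects the fact that extensions of $P_2^0$ by $P_0^n$ have two isomorphism classes of indecomposable middle terms, namely $P_2^n$ and $Z_2^n$, which must be separated. The $1/q$ factors appearing in (3) and (5)--(7) arise from dividing out the stabilizer in $\mathrm{Aut}(N_{(n)})$ of the chosen socle generator.

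For items (5)--(7) the middle terms can decompose as $P_2^{n+s}\oplus P_0^{m-s}$ or as $B_2^{n',m'}$, with the integer $s$ measuring how much of the socle of $P_0^m$ is absorbed into the $N_{(n)}$-component of $P_2^n$. The plan is to enumerate, for each product $u_{P_2^n}u_{P_0^m}$ or $u_{P_1^n}u_{P_1^m}$, all such middle terms together with the corresponding submodule counts, and then solve the resulting system of equations for $u_{P_2^n\oplus P_0^m}$ and $u_{B_2^{n,m}}$. The boundary cases $n+s=w$, in which $P_0^w$ becomes injective, are handled via Lemma~\ref{lem_injective}, which introduces the extra factor $|\mathrm{Mon}(I,Y)|/|\mathrm{Mon}(I,X)|$ and accounts for the slightly different formula when $n=w$ in (5) and (6). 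The main obstacle is exactly the bookkeeping in item (7): one must verify that each sliding value $s\in\{1,\ldots,\min(w-n,m-1)\}$ contributes the monomial $\frac{q-1}{q^{s+1}}u_{P_2^{n+s}\oplus P_0^{m-s}}$, which comes from $q-1$ nonzero scalars selecting the sliding direction at each of the $s$ successive steps, together with a stabilizer factor of $q^{s+1}$ at the $s$-th level; the parallel identity for $u_{B_2^{n,m}}$ requires carefully distinguishing ``parallel'' attachments (producing $B_2^{n,m}$ and $B_2^{n+1,m-1}$) from ``shifted'' ones (producing $P_2^{n+1}\oplus P_0^{m-1}$). Once this combinatorial accounting is set up, each identity is a routine algebraic rearrangement inside $\mathcal{H}(\Lambda_{2,w}(k))$.
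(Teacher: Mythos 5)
Your overall plan --- expand each product by the Ringel--Hall multiplication, enumerate the middle terms using Lemma~\ref{lem_classification}, compute each structure constant $F_{X,Z}^Y$ directly, and match coefficients --- is exactly the method of the paper. However, several specifics do not hold up. Your claim that the boundary cases $n=w$ in (5) and (6) are handled ``via Lemma~\ref{lem_injective}'' is misplaced: that lemma concerns splitting an injective summand off the \emph{sub}module $X$, which is not the situation here; the paper simply observes that $P_2^{w+1}$ does not exist in the classification (its ambient space would exceed the nilpotency bound), so the corresponding middle term never occurs. Your explanation of the coefficient $\frac{q-1}{q^{s+1}}$ in (7) --- ``$q-1$ nonzero scalars \ldots at each of the $s$ successive steps'' --- would produce $(q-1)^s$, not a single factor of $q-1$; the actual computation is the single count $F_{P_2^n,P_0^m}^{P_2^{n+s}\oplus P_0^{m-s}}=(q-1)q^{m-s-1}$, which after dividing by $F_{P_2^n,P_0^m}^{P_2^n\oplus P_0^m}=q^m$ yields $\frac{q-1}{q^{s+1}}$, so the $s$-dependence comes from the exponent, not from repeated choices of scalar.

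You also omit a structural ingredient the paper uses repeatedly to cut down the list of possible middle terms in (5)--(7): the subcategory $\mathcal{S}$ of $\mod\Lambda_{2,w}(k)$ is closed under extensions, so no summand of type $Z_2^m$, $P_1^0$ or $P_2^0$ can occur in the middle term of an extension between objects of $\mathcal{S}_2$. Finally, the second identity of (7), for $u_{B_2^{n,m}}$, is not ``routine algebraic rearrangement'' once the middle terms are listed: it rests on eight separate Hall-number computations, including the asymmetric pair $F_{P_1^m,P_1^n}^{B_2^{n+1,m-1}}=q^{m-2}(q-2)$ versus $F_{P_1^n,P_1^m}^{B_2^{n+1,m-1}}=q^{m-2}(q-1)$, and the fact (proved via commutativity of the classical Hall algebra together with a correspondence with exact sequences of ambient spaces) that the split middle terms $P_1^{n+s}\oplus P_1^{m-s}$ contribute equally to both products $u_{P_1^n}u_{P_1^m}$ and $u_{P_1^m}u_{P_1^n}$ and hence cancel. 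That accounting is the entire content of that case, and the proposal only gestures at it.
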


\begin{proof}
 Let $w\geq 2$ and $A=\Lambda_{2,w}(k)$.

 \begin{enumerate}
  \item Let $n\geq 1$. It is easy to see that $F_{P_0^n,P_1^0}^{P_1^n}=1$,

 $$\Hom_A(P_0^n,P_1^0)=\Hom_A(P_1^0,P_0^n)=\Ext^1_A(P_0^n,P_1^0)=0,$$
 and the only extensions of $P_1^0$ by $P_0^n$ are: $P_1^n$, $P_0^n\oplus P_1^0$.
 Therefore, $u_{P_0^n}u_{P_1^0}=u_{P_1^n}+u_{P_0^n\oplus P_1^0}$ and
 $u_{P_1^0}u_{P_0^n}=u_{P_0^n\oplus P_1^0}$.
 We are done.

 \item The proof is similar to the proof of the statement 1.

  \item Let $n\geq 2$. Note that there is no monomorphism $Z_2^{n-1}\to P_2^n$ neither $Z_2^{n-1}\to B_2^{n-1,1}$. Therefore the only extensions of $P_0^1$ by $Z_2^{n-1}$ are: $Z_2^n$ and $P_0^1\oplus Z_2^{n-1}$. Therefore
  $u_{Z_2^{n-1}}u_{P_0^1}=u_{Z_2^n}+qu_{P_0^1\oplus Z_2^{n-1}}$ for $n>2$ and $u_{Z_2^{1}}u_{P_0^1}=u_{Z_2^2}+u_{P_0^1\oplus Z_2^{1}}$. On the other hand, $F_{P_0^1,Z_2^{n-1}}^{P_0^1\oplus Z_2^{n-1}}=q$,
  $F_{P_0^1,Z_2^{n-1}}^{P_2^n}=1$
 and  there are no epimorphisms: $Z_2^n\to Z_2^{n-1}$,  $B_2^{n-1,1}\to Z_2^{n-1}$. It follows that $u_{P_0^1}u_{Z_2^{n-1}}=qu_{P_0^1\oplus Z_2^{n-1}}+u_{P_2^n}$ for $n\ge 2$. We are done.
%

  \item Let $n\geq 2$. It is easy to see that $F_{P_0^n,P_2^0}^{P_2^n}=F_{P_0^n,P_2^0}^{Z_2^n}=1$,

 $$\Hom_A(P_0^n,P_2^0)=\Hom_A(P_2^0,P_0^n)=\Ext^1_A(P_0^n,P_2^0)=0,$$
 and the only extensions of $P_2^0$ by $P_0^n$ are: $P_2^n$, $Z_2^n$ and $P_0^n\oplus P_2^0$.
 Therefore, $u_{P_0^n}u_{P_2^0}=u_{P_2^n}+u_{Z_2^n}+u_{P_0^n\oplus P_2^0}$ and
 $u_{P_2^0}u_{P_0^n}=u_{P_0^n\oplus P_2^0}$.
 We are done.

 \item Let $n\geq 3$. The only extensions of $P_0^1$ by $P_2^n$ are: $P_2^{n+1}$ (if $n<w$) and
 $P_0^1\oplus P_2^n$, because $P_2^n,P_0^1\in \mathcal{S}_2$,
 the category $\mathcal{S}$ is closed under extensions and there is no monomorphism $P_2^n\to B_2^{n,1}$.
 It is straightforward to check that $F_{P_2^n,P_0^1}^{P_2^{n+1}}=1$ (if $n<w$) and $F_{P_2^n,P_0^1}^{P_2^n\oplus P_0^1}=q$. We are done.

 \item Let $n\ge 3$. It is straightforward to check that $F_{P_1^n,P_1^1}^{B_2^{n,1}}=F_{P_1^n,P_1^1}^{P_1^1\oplus P_1^n}=F_{P_1^1,P_1^n}^{P_1^1\oplus P_1^n}=q$ and
 $F_{P_1^1,P_1^n}^{P_2^{n+1}}=1$. The only extensions of $P_1^1$ by $P_1^n$ are $B_2^{n,1}$ and $P_1^1\oplus P_1^n$, because the category $\mathcal{S}$
 is closed under extensions and there is no epimorphism
 $P_2^{n+1}\to P_1^1$.
 Moreover, the only extensions of $P_1^n$ by $P_1^1$ are $P_2^{n+1}$ and $P_1^1\oplus P_1^n$, because the category $\mathcal{S}$
 is closed under extensions and there is no epimorphism
 $B_2^{n,1}\to P_1^n$.
 Therefore, $u_{P_1^n}u_{P_1^1}=qu_{B_2^{n,1}}+qu_{P_1^1\oplus P_1^n}$ and $u_{P_1^1}u_{P_1^n}=u_{P_2^{n+1}}+qu_{P_1^1\oplus P_1^n}$. We are done.

 \item Let $n-2\geq m\geq 2$ and $m>s\geq 1$. It is easy to check that $F_{P_2^n,P_0^m}^{P_2^{n+m}}=1$, $F_{P_2^n,P_0^m}^{P_2^n\oplus P_0^m}=q^m$, $F_{P_2^n,P_0^m}^{P_2^{n+s}\oplus P_0^{m-s}}=(q-1)q^{m-s-1}$ and the only extensions of $P_0^m$ by $P_2^n$ are  $P_2^{n+m}$ and $P_2^{n+s}\oplus P_0^{m-s}$ for all $m>s\geq 1$. It is because the category $\mathcal{S}$ is closed under extensions and there are no monomorphisms
 $P_2^n\to B_2^{n+s,m-s}$, $P_2^n\to P_0^{n+s}\oplus P_2^{m-s}$ for $s\geq 0$. The first formula follows.

 We divide the proof of the second formula into several
 steps.
 \begin{enumerate}
  \item For $s=1,\ldots,m-1$ we have
  $F_{P_1^m,P_1^n}^{P_1^{n+s}\oplus P_1^{m-s}}=F_{P_1^n,P_1^m}^{P_1^{n+s}\oplus P_1^{m-s}}.$ This follows since the  Steinitz's classical Hall algebra is commutative and
   there is 1-1 correspondence between exact sequences
  $0\to P_1^n \to P_1^{n+s}\oplus P_1^{m-s}\to P_1^m\to 0$ (resp. $0\to P_1^m \to P_1^{n+s}\oplus P_1^{m-s}\to P_1^n\to 0$) and the exact sequences of the ambient spaces: $0\to N_{(n)} \to N_{(n+s)}\oplus N_{(m-s)}\to N_{(m)}\to 0$ (resp. $0\to N_{(m)} \to N_{(n+s)}\oplus N_{(m-s)}\to N_{(n)}\to 0$).

  \item For $s=2,\ldots,m-1$ we have
  $F_{P_1^m,P_1^n}^{B_2^{n+s,m-s}}=F_{P_1^n,P_1^m}^{B_2^{n+s,m-s}} =q^{m-s-1}(q-1)$.
  \item We have
  $F_{P_1^m,P_1^n}^{B_2^{n+1,m-1}}=q^{m-2}(q-2)$   and $F_{P_1^n,P_1^m}^{B_2^{n+1,m-1}}=q^{m-2}(q-1).$
  \item $F_{P_1^n,P_1^m}^{B_2^{n,m}}=q^m$ and
  $F_{P_1^m,P_1^n}^{B_2^{n,m}}=0$, because there is no epimorphism $B_2^{n,m}\to P_1^n$.
  \item $F_{P_1^n,P_1^m}^{P_2^{n+s}\oplus P_0^{m-s}}=0$ for $s\geq 0$, because there is no epimorphisms
  $P_2^{n+s}\oplus P_0^{m-s}\to P_1^m$.
  \item $F_{P_1^m,P_1^n}^{P_2^{n+s}\oplus P_0^{m-s}}=0$ for $s=0$ and $s>1$, because there is no epimorphisms
  $P_2^{n+s}\oplus P_0^{m-s}\to P_1^n$.
  \item $F_{P_1^m,P_1^n}^{P_2^{n+1}\oplus P_0^{m-1}}=q^{m-1}(q-1)$.
  \item straightforward calculations show that$F_{P_1^m,P_1^n}^{P_0^{n+s}\oplus P_2^{m-s}}=0=F_{P_1^n,P_1^m}^{P_0^{n+s}\oplus P_2^{m-s}}$ for $s=1,\ldots m-1$.
 \end{enumerate}
The fact that the category $\mathcal{S}$ is closed under
etensions and the statements (a)-(h) imply the second formula.
  \end{enumerate}
  This finishes the proof.
\end{proof}

%

\section{The proof of the main result}
\label{sec-hall-poly}

As in Section \ref{section_classical} let us define $\mathcal{F}_0$ as the $\ZZ[t]$-algebra generated by $u_a$ as free  associative algebra, where $a\in\NN_0^{I_{2,w}}$. Given $R\in\mathcal{F}$ and a number $q$ we denote by $R(q)$ the element of the $\mathcal{H}(\Lambda_{2,w}(k))$, where $q=|k|$,  obtained by evaluating the coefficients of $R$ at $q$.

The set of the isomorphism classes of finite-dimensional $\Lambda_{2,w}(k)$-modules is in a natural 1-1 correspondence with the set $\NN_0^{I_{2,w}}$ of functions $a:I_{2,w}\rightarrow\NN_0$. We say that $a\in \NN_0^{I_{2,w}}$ {\em is in the polynomial zone} if for every $b,c\in \NN_0^{I_{2,w}}$ there exists $\varphi_{a,c}^b\in \mathbb{Z}[t]$
such that for any finite field $k$,
 $F_{a(k),c(k)}^{b(k)}=\varphi_{a,c}^b(q),$
 where $q=|k|$. Let us denote by $\PP_w$ the polynomial zone, that is, the set of the $a\in I_{2,w}$ which are in the polynomial zone. We need to prove that $\PP_w=\NN_0^{I_{2,w}}$. In what follows we identify functions $a$ with the corresponding modules.

 If $w\le w'$ then $I_{2,w}\subset I_{2,w'}$ and we treat $\NN_0^{I_{2,w}}$ as a subset of $\NN_0^{I_{2,w'}}$ in an obvious way: we extend a function defined on $I_{2,w}$ to $I_{2,w'}$ by putting zeros outside $I_{2,w}$.
 Given a partition $\alpha=(\alpha_1,\ldots,\alpha_l)$ we denote by $P^{\alpha}_0$ the module $(0,N(\alpha),0)=\bigoplus_{i=1}^lP^{\alpha_i}_0$.

 \begin{lem}\label{lem_embed}
 If $w\le w'$  then  {$\NN_0^{I_{2,w}}\cap \PP_{w'}\subseteq\PP_w$}.
 \end{lem}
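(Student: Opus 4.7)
The strategy is to reduce polynomiality over $\Lambda_{2,w}$ directly to polynomiality over $\Lambda_{2,w'}$ by means of the natural surjection
$\Lambda_{2,w'}(k) \twoheadrightarrow \Lambda_{2,w}(k)$
induced by $k[x]/(x^{w'}) \twoheadrightarrow k[x]/(x^w)$ in the bottom-right entry. No genuine computation will be needed; the whole statement will follow from the observation that subobject counts are insensitive to enlarging the ambient algebra to one with weaker relations on the module in question.

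First I would observe that the above surjection identifies $\Lambda_{2,w}(k)$-modules with the full subcategory of $\Lambda_{2,w'}(k)$-modules whose ambient space $V''$ satisfies $x^w V'' = 0$. Under this identification, for every $X \in I_{2,w} \subseteq I_{2,w'}$ the module $X(k)$ built for $\Lambda_{2,w}(k)$ is the same underlying datum (vector spaces, the map $f$, the $x$-action) as the module $X(k)$ built for $\Lambda_{2,w'}(k)$; only the ambient algebra one regards it over changes. Consequently, for $a \in \NN_0^{I_{2,w}}$ the module $a(k)$ is well-defined independently of whether one reads $a$ as an element of $\NN_0^{I_{2,w}}$ or of $\NN_0^{I_{2,w'}}$.

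Next, take $a,b,c \in \NN_0^{I_{2,w}}$. All three modules $a(k)$, $b(k)$, $c(k)$ lie in the subcategory described above. Any $\Lambda_{2,w'}$-submodule of $b(k)$ inherits the relation $x^w V'' = 0$ and so is automatically a $\Lambda_{2,w}$-submodule; the same holds for the quotient. Therefore the set of subobjects $U \subseteq b(k)$ with $U \cong a(k)$ and $b(k)/U \cong c(k)$ is literally the same set whether one computes in $\Lambda_{2,w}(k)$-mod or in $\Lambda_{2,w'}(k)$-mod, giving
\[
F^{b(k)}_{a(k),c(k)}\ \text{in}\ \mathcal{H}(\Lambda_{2,w}(k)) \;=\; F^{b(k)}_{a(k),c(k)}\ \text{in}\ \mathcal{H}(\Lambda_{2,w'}(k)).
\]

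Finally, the hypothesis $a \in \PP_{w'}$ furnishes, for each $b,c \in \NN_0^{I_{2,w}} \subseteq \NN_0^{I_{2,w'}}$, a polynomial $\varphi^b_{a,c} \in \mathbb{Z}[t]$ expressing the right-hand side as a function of $q = |k|$; the same polynomial then expresses the left-hand side, which is exactly what is required to conclude $a \in \PP_w$. The only point demanding care, and the closest thing to an obstacle, is the bookkeeping that the indecomposable list $I_{2,w}$ embeds in $I_{2,w'}$ and that the corresponding modules are carried to each other by the quotient map; both facts are immediate from the explicit description in Lemma \ref{lem_classification}.
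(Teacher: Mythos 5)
Your argument is correct and is precisely the reasoning the paper leaves implicit (its proof of this lemma is just the word ``Obvious''). The key point---that restriction along the quotient map $\Lambda_{2,w'}(k)\twoheadrightarrow\Lambda_{2,w}(k)$ exhibits $\Lambda_{2,w}(k)$-mod as a full subcategory of $\Lambda_{2,w'}(k)$-mod closed under submodules and quotients, so that for $a,b,c\in\NN_0^{I_{2,w}}$ the number $F^{b(k)}_{a(k),c(k)}$ is the same computed over either algebra---is exactly what makes the inclusion immediate.
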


 \begin{proof}
 Obvious.
 \end{proof}

 \begin{lem}\label{lem_simple}  For every $w\in \NN$ the simple modules, that is $P^1_0$ and $P^0_1$ are in the polynomial zone $\PP_w$.
 \end{lem}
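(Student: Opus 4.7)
The plan is to compute $F_{S,Z}^Y$ directly for $S\in\{P_1^0,P_0^1\}$ and arbitrary $Y=b(k)$, $Z=c(k)$, and to exhibit each count as a polynomial in $q=|k|$ with integer coefficients.

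Consider first $S=P_1^0=(N_{(1)},0,0)$. A submodule of $Y=(V',V'',f)$ isomorphic to $S$ is of the form $(W',0,0)$ with $W'$ a $1$-dimensional $k$-subspace contained in $\soc V'\cap\ker f$. Inspecting the list in Lemma \ref{lem_classification}, only the summand types $P_1^0$, $P_2^0$, $P_2^1$ and $Z_2^m$ ($m\ge 2$) contribute to $\soc V'\cap\ker f$, each summand contributing one dimension. Decompose $Y=Y_1\oplus Y_2$, where $Y_1$ gathers precisely the contributing summands (with their multiplicities from $b$). Then every $S$-submodule lies inside $Y_1$ and the quotient splits as $(Y_1/W')\oplus Y_2$; by Krull--Schmidt, $F_{P_1^0,Z}^Y=F_{P_1^0,Z_1}^{Y_1}$ for the unique $Z_1$ determined by $Z_1\oplus Y_2\cong Z$ (and vanishes if no such $Z_1$ exists), reducing the computation to $Y=Y_1$.

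I then analyze the finite set of $\Aut(Y_1)$-orbits on $1$-dimensional subspaces of $\soc V'\cap\ker f$. The quotient $Y_1/W'$ is constant on orbits: a $P_1^0$ summand vanishes, a $P_2^0$ becomes $P_1^0$, a $P_2^1$ becomes $P_1^1$, and a $Z_2^m$ becomes $P_1^m$ for $m\ge 2$, with the finitely many "diagonal" orbits reducible to these "pure" representatives by a suitable automorphism. By orbit-stabilizer, each orbit size is a polynomial in $q$: both $|\Aut(Y_1)|$ and the stabilizer orders are orders of unit groups of explicit finite-dimensional $k$-algebras whose $k$-dimensions depend only on $b$. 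Summing the orbit sizes that produce the target $Z_1$ yields the required polynomial $\varphi_{P_1^0,c}^b(t)\in\ZZ[t]$.

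The case $S=P_0^1=(0,N_{(1)},0)$ is entirely analogous, with $\soc V''$ in place of $\soc V'\cap\ker f$: most summand types contribute (each $B_2^{m',m}$ with dimension $2$ and the other types having $V''\ne 0$ with dimension $1$). The main obstacle will be the $B_2^{m',m}$ case, where the $2$-dimensional socle contribution admits both "coordinate" and "diagonal" $\Aut(Y)$-orbits of $1$-dimensional subspaces, each yielding a distinct quotient isomorphism type (for example, a coordinate direction can turn $B_2^{m',m}$ into a module of type $P_2$ or $P_1$ plus a $P_0$ summand, while a generic diagonal direction produces yet another type). Nonetheless, these orbits are finitely many, explicitly enumerable via Lemma \ref{lem_classification}, with polynomial orbit sizes, completing the proof.
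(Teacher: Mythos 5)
You take a genuinely different, and far more laborious, route than the paper. The paper's proof is a two-line reduction: for fixed $b,c$ with $\dim_k b(k)-\dim_k c(k)=1$, the dimension vector forces the kernel of \emph{every} epimorphism $b(k)\to c(k)$ to lie in a single isomorphism class, either $P_0^1$ or $P_1^0$; hence for each of the two simples $a$, $F_{a(k),c(k)}^{b(k)}$ is either $0$ or equals the \emph{total} number of subobjects $U\subseteq b(k)$ with $b(k)/U\cong c(k)$, and the latter is a polynomial in $q$ by Ringel's formula (4) on p.~441 of \cite{ringel}. No case analysis of socles, summands, or orbits is needed.

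Your proposal essentially tries to reprove the special case of Ringel's polynomiality result by an explicit $\Aut$-orbit count, and as written it has real gaps. The claim that every $\Aut(Y_1)$-orbit of lines in $\soc V'\cap\ker f$ contains a ``pure'' coordinate representative is asserted but not proved; whether it holds depends on the image of $\Aut(Y_1)$ in $GL(\soc V'\cap\ker f)$, which in turn depends on the $\Hom$-spaces between the contributing summands and their induced action on the socle (for instance, some of these $\Hom$-spaces act by zero on the socle, giving a block-triangular image rather than the full $GL$). Even granting this, uniform polynomiality of orbit sizes requires showing that the stabilizer of each representative is the unit group of a subalgebra of $\End(Y_1)$ of field-independent $k$-dimension, which you do not verify. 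Finally, the $P_0^1$ case with $B_2^{m',m}$ summands, which produces $2$-dimensional socle contributions and the richest orbit structure, is explicitly flagged as the ``main obstacle'' and then left undone. So the plan is plausible in outline, but it is a sketch of a much longer argument rather than a proof, and it misses the short reduction the paper actually uses.
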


 \begin{proof}
  Let $b,c\in \mathbb{N}_0^{I_2,w}$ be such that
 $\dim_kb(k)-\dim_kc(k)=1$.
 Note that kernels of all epimorphisms
 $b(k)\to c(k)$ are isomorphic to $P_0^1$ or kernels of all epimorphisms
 $b(k)\to c(k)$ are isomorphic to $P_1^0$. It follows that for $a\in\{ P_0^1,P_1^0\}$ the number
 $F_{a(k),c(k)}^{b(k)}$ equals the number of all subobjects $U$ of $b(k)$
 such that $b(k)/U\simeq c(k)$ and it is given by a~polynomial by Ringel's results in \cite[(4) at page 441]{ringel}.
 \end{proof}

 \begin{lem}\label{lem_inductive}
Let $R\in\mathcal{F}_0$ be a linear combination, with coefficients in $\ZZ[t]$, of products of elements $u_b$ with $b\in\PP_w$.
Assume that $a\in\NN_0^{I_{2,w}}$ and there exists $h\in\ZZ[t]$ with the leading coefficient equals to $\pm 1$ and  such that
\begin{equation}\label{eq_ind}
h(q)\cdot u_a=R(q)
\end{equation}
in the Hall algebra $\mathcal{H}(\Lambda_{2,w}(k))$ for every finite field $k$, where $q=|k|$.
Then $a\in\PP_w$.
\end{lem}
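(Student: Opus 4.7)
The plan is to extract the polynomial behaviour of $F^{b(k)}_{a(k),c(k)}$ from the hypothesised identity $h(q)u_a=R(q)$ by multiplying on the right by $u_c$, comparing coefficients of $u_b$ in the Hall algebra, and then applying polynomial division in $\ZZ[t]$.

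First, I would show that the coefficient $\Psi_b(q)$ of $u_b$ in $R(q)u_c$ is a polynomial in $q$ with integer coefficients. Writing $R=\sum_{\underline{b}}\alpha_{\underline{b}}(t)\,u_{b_1}\cdots u_{b_r}$ with each $b_i\in\PP_w$ and $\alpha_{\underline{b}}\in\ZZ[t]$, the contribution of a single monomial to $\Psi_b(q)$ is $\alpha_{\underline{b}}(q)\,F^b_{b_1,\ldots,b_r,c}(q)$. By iterating the associativity formula (\ref{eq-assoc-F}), the term $F^b_{b_1,\ldots,b_r,c}(q)$ expands as a finite sum, indexed by tuples $(T_1,\ldots,T_{r-1})\in(\NN_0^{I_{2,w}})^{r-1}$, of products $F^b_{b_1,T_1}(q)\,F^{T_1}_{b_2,T_2}(q)\cdots F^{T_{r-1}}_{b_r,c}(q)$. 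The indexing set depends only on the representation-finite algebra $\Lambda_{2,w}$ (Lemma \ref{lem_classification}) and is thus independent of $k$; only finitely many of its tuples contribute, because of the dimension constraint imposed by the filtration; and in each factor the first argument lies in $\PP_w$, so the factor is polynomial in $q$ by the definition of the polynomial zone. Multiplying $h(q)u_a=R(q)$ on the right by $u_c$ and reading off the $u_b$-coefficient then yields the identity of integers
$$
h(q)\,F^{b(k)}_{a(k),c(k)} \;=\; \Psi_b(q)
$$
for every finite field $k$ with $q=|k|$.

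Next, since $h\in\ZZ[t]$ has leading coefficient $\pm 1$, polynomial division in $\ZZ[t]$ gives $g,r\in\ZZ[t]$ with $\Psi_b=hg+r$ and $\deg r<\deg h$. Substituting a prime power $q$ into this decomposition and using the identity above produces $r(q)=h(q)\bigl(F^{b(k)}_{a(k),c(k)}-g(q)\bigr)$, so $r(q)$ is an integer multiple of $h(q)$. But $|r(q)|<|h(q)|$ for all sufficiently large $q$ by degree comparison, which forces $r(q)=0$ for infinitely many prime powers $q$ and hence $r\equiv 0$ in $\ZZ[t]$. Therefore $\Psi_b=h\cdot g$ and $F^{b(k)}_{a(k),c(k)}=g(q)$ for every prime power $q$ with $h(q)\neq 0$, so $\varphi^b_{a,c}:=g$ serves as the desired Hall polynomial.

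The main obstacle is the first step, establishing that $\Psi_b(q)$ is genuinely polynomial in $q$ and not merely integer-valued: this relies on both the representation-finiteness of $\Lambda_{2,w}$ (so the set of possible intermediate modules does not depend on $k$) and the hypothesis that every factor $b_i$ appearing in $R$ already lies in the polynomial zone $\PP_w$. The subsequent polynomial-division argument is then routine. A final technical point is that the above gives $F^{b(k)}_{a(k),c(k)}=\varphi^b_{a,c}(|k|)$ a priori only when $h(|k|)\neq 0$; for the particular polynomials $h$ that arise in the intended applications (those of Lemmas \ref{lem-reduction-forms} and \ref{lem_relacja2} and of Proposition \ref{prop_Justyna}) one checks that none of their roots is a prime power, so the equality extends to every finite field and $a\in\PP_w$.
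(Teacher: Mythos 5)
Your proposal follows essentially the same route as the paper: multiply the identity $h(q)u_a=R(q)$ on the right by $u_c$, read off the coefficient of $u_b$ using the associativity formula~(\ref{eq-assoc-FF}), observe that the resulting right-hand side $R_{b,c}(q)$ is polynomial in $q$ because every factor lies in $\PP_w$, and then divide out $h$. The only difference is that the paper delegates the final step to the cited Lemma on p.~441 of \cite{ringel}, while you unpack it as explicit polynomial division in $\ZZ[t]$ exploiting the unit leading coefficient of $h$; the two are the same argument.

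One point deserves attention, and you are right to flag it: polynomial division only gives $F^{b(k)}_{a(k),c(k)}=g(q)$ at those prime powers $q$ where $h(q)\neq 0$. The Ringel lemma you are implicitly reproving has a hidden (and in Ringel's original setting harmless) hypothesis that $h$ does not vanish at prime powers; when the lemma is stated as broadly as Lemma~\ref{lem_inductive} here, that hypothesis is genuinely needed. However, your closing sentence ``one checks that none of their roots is a prime power'' is itself a nontrivial claim that you do not verify. For several of the concrete $h$ appearing in the paper it is immediate (for instance $h=q^m$ in Lemma~\ref{lem-reduction-forms} or $h(q)=1+q+\cdots+q^{n-1}$ in Lemma~\ref{lem_relacja2}), but for the polynomials $h_n$ of Proposition~\ref{prop_Justyna} and $a'_{(n),(1^n)}$ of Lemma~\ref{lem_P0n} the absence of prime-power roots is not obvious and would require an argument (e.g.\ a sign/size estimate, or a direct count showing that $\sum_{\mu\in\CP_n}F_{N_\mu,c}^{b}$ already forces positivity). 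As it stands, your proof establishes a slightly weakened version of the lemma (valid whenever $h$ has no prime-power roots), plus an unproved assertion that all applications fall in that case. If you want a complete argument you should either add that hypothesis to the lemma statement and verify it in each application, or supply the missing nonvanishing check.
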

\begin{proof} Let $b,c\in I_{2,w}$. By multiplying the equation (\ref{eq_ind}) by $u_c$ we obtain
$$
h(q)\cdot u_au_c=R(q)u_c.
$$
Comparing the coefficients at the basic element $u_b$ we get
$$
h(q)F_{a(k),c(k)}^{b(k)}(q)=R_{b,c}(q)
$$
where $R_{b,c}(q)$  depends polynomially on $q=|k|$ by the formula (\ref{eq-assoc-FF})  thanks to the fact that $R$ is a linear combination of products of elements $u_b$ with $b\in\PP_w$.  Therefore, by \cite[Lemma, p. 441]{ringel} also $F_{a(k),c(k)}^{b(k)}(q)$ depends polynomially on $q$, because the leading coefficient of $h$ is equal to $\pm 1$. We have proved that $a$ is in the polynomial zone.
\end{proof}

\begin{lem}\label{lem_semisimple} For every $m,w\in\NN$, the module $(P^1_0)^m$ is in the polynomial zone $\PP_w$.
\end{lem}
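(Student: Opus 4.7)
The proof proceeds by induction on $m$. The base case $m = 1$ is Lemma \ref{lem_simple}. Assume $m \geq 2$ and that $(P_0^1)^l \in \PP_w$ for every $l < m$ and every $w$. By Lemma \ref{lem_embed}, it suffices to prove $(P_0^1)^m \in \PP_{w'}$ with $w' = \max(w,m)$, so I may assume $w \geq m$, ensuring that every $P_0^\mu$ with $\mu \in \CP_m$ is a well-defined $\Lambda_{2,w}(k)$-module.

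The full subcategory of $P_0$-type modules is closed under subobjects, quotients and extensions in $\Lambda_{2,w}(k)$-mod, and (restricted to types with parts bounded by $w$) is equivalent as an abelian category to the category of nilpotent $k[x]$-modules. Consequently, the multiplication of basis elements $u_{P_0^\mu}$ inside $\mathcal{H}(\Lambda_{2,w}(k))$ agrees with the multiplication of $u_\mu$ in the classical Hall algebra of $k[x]$. Transferring the identity of Proposition \ref{prop_Justyna} (with $n = m$) through this embedding yields
\[
\sum_{\mu \in \CP_m} u_{P_0^\mu} \;=\; h_m(q)\, u_{(P_0^1)^m} \;+\; R'_m(q),
\]
where $h_m \in \ZZ[t]$ has leading coefficient $\pm 1$ and $R'_m$ is a $\ZZ[t]$-linear combination of products $u_{(P_0^1)^{l_1}} \cdots u_{(P_0^1)^{l_r}}$ with $r > 1$ and $l_i < m$. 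By the induction hypothesis each factor lies in $\PP_w$, so $R'_m$ already has the form required by Lemma \ref{lem_inductive}.

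Rearranging gives $h_m(q)\, u_{(P_0^1)^m} = \sum_{\mu \in \CP_m} u_{P_0^\mu} - R'_m(q)$. Applying Lemma \ref{lem_inductive} will produce $(P_0^1)^m \in \PP_w$ once each remaining term $u_{P_0^\mu}$ with $\mu \neq (1^m)$ corresponds to a module already placed in $\PP_w$. This auxiliary statement --- that $P_0^\mu \in \PP_w$ for every $\mu \in \CP_m$ with $\mu \neq (1^m)$ --- is the principal obstacle, since each such $u_{P_0^\mu}$ appears on the right-hand side as a single basis element rather than as a product of things we already know. I would handle it by a nested induction on $\mu$ (ordered, say, by $\mathbf{n}(\mu)$) inside the $P_0$-type subalgebra, using classical Hall-product identities of the type $u_{P_0^1} u_{P_0^{n-1}} = u_{P_0^n} + q\,u_{P_0^{(n-1,1)}}$ and their higher analogues to reduce each $u_{P_0^\mu}$, up to a factor of leading coefficient $\pm 1$, to a polynomial in $u_b$'s with $b$ already in $\PP_w$ (either smaller indecomposables $P_0^n$, the inductively available $(P_0^1)^l$ for $l < m$, or $P_0^\nu$ treated at an earlier stage of the nested induction). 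Once this auxiliary claim is in place, Lemma \ref{lem_inductive} applies to the displayed identity and yields $(P_0^1)^m \in \PP_w$, completing the outer induction.
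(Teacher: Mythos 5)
Your proposal starts correctly: the outer induction on $m$, the base case via Lemma \ref{lem_simple}, the reduction $w\ge m$ via Lemma \ref{lem_embed}, and the identity $\sum_{\mu\in\CP_m}u_{P_0^\mu}=h_m(q)u_{(P_0^1)^m}+R_m(q)$ from Proposition \ref{prop_Justyna} are all as in the paper. The divergence --- and the gap --- is at the final step. You rearrange to $h_m(q)u_{(P_0^1)^m}=\sum_{\mu}u_{P_0^\mu}-R_m(q)$ and want to invoke Lemma \ref{lem_inductive}, but that lemma requires the right-hand side to be a $\ZZ[t]$-combination of \emph{products} of basis elements $u_b$ with $b$ already in $\PP_w$. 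The terms $u_{P_0^\mu}$ for $\mu\neq(1^m)$ are single basis elements, and at this point nothing is known about $P_0^\mu\in\PP_w$. You flag this as ``the principal obstacle'' and propose a nested induction on $\mathbf{n}(\mu)$, but that nested induction has no admissible base case. Its starting point would be $\mu=(m)$, i.e.\ $P_0^m$; but polynomiality of $P_0^m$ is exactly Lemma \ref{lem_P0n}, which in the paper is proved \emph{after} Lemma \ref{lem_semisimple} and explicitly uses it. More structurally, formula (\ref{eq_ulambda}) writes every $u_\mu$ as $a'_{\mu,(1^m)}u_{(1^m)}+\sum_{\lambda\neq(1^m)}a'_{\mu,\lambda}v_{\lambda'}$, and the coefficient $a'_{\mu,(1^m)}$ is generically nonzero (cf.\ the last column of $A^{-1}$ in the $n=4$ example), so no individual $u_\mu$ with $\mu\neq(1^m)$ can be isolated without already controlling $u_{(1^m)}$. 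The dependency runs the opposite way from what you need, so the nested induction is circular.

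The paper escapes this by \emph{not} applying Lemma \ref{lem_inductive} here. Instead it multiplies the identity by $u_c$, compares coefficients at $u_b$, and observes that when $\mathbf{dim}\,b(k)-\mathbf{dim}\,c(k)=(0,m)$ every submodule $U\subseteq b(k)$ with $b(k)/U\cong c(k)$ automatically has zero first component and is therefore isomorphic to some $P_0^\mu$ with $\mu\in\CP_m$. Hence the left-hand side $\sum_{\mu\in\CP_m}F_{P_0^\mu(k),c(k)}^{b(k)}$ is precisely the total number of submodules of $b(k)$ with prescribed quotient type, and this aggregate count is a polynomial in $q$ by Ringel's result for representation-finite algebras (\cite[p.\ 441]{ringel}) --- no knowledge of the individual summands $F_{P_0^\mu,c}^{b}$ is required. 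Combined with the induction hypothesis handling $\widetilde{R}_{m,b,c}$ and the leading-coefficient-$\pm1$ property of $h_m$, the cancellation argument of \cite[Lemma, p.\ 441]{ringel} yields polynomiality of $F_{(P_0^1)^m,c}^{b}$. This aggregate interpretation of the left-hand side is the essential idea missing from your argument; once you have it, the auxiliary claim about the individual $P_0^\mu$ becomes unnecessary, and in fact it can only be established afterwards, as Lemma \ref{lem_P0n} does.
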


\begin{proof} We proceed by induction on $m$. For $m=1$ the assertion follows by Lemma \ref{lem_simple}. Assume that $m>1$ and  $(P^1_0)^s$ is in polynomial zone for every $1\le s< m$. By Lemma \ref{lem_embed} we can assume that $w\ge m$. Then, by Proposition \ref{prop_Justyna}
\begin{equation}\label{form_semisimple1}
\sum_{\mu\in\CP_m}u_{\mu}=h_m(q)u_{(P_0^1)^m}+R_m(q)
\end{equation}
in $\mathcal{H}(\Lambda_{2,w}(k))$, where $h_m(q)$ is a nonzero  polynomial in $q$ with the leading coefficient $\pm 1$ and $R_m\in\mathcal{F}$ is a linear combination of elements of the form $u_{(P_0^1)^{l_1}}\ldots  u_{(P_0^1)^{l_r}}$ with $r\ge 2$. 

Let $b,c\in \mathbb{N}^{I_{2,w}}$. By multiplying
\ref{form_semisimple1} by $u_c$ in the Hall algebra and
 comparing the coefficients at the basic element $u_b$ we get
\begin{equation}\label{form_semisimple2}
\sum_{\lambda\in\CP_m}F_{P^{\lambda}_0(k),c(k)}^{b(k)}(q)=h_m(q)F_{(P^1_0(k))^m,c(k)}^{b(k)}(q)+\widetilde{R}_{m,b,c},
\end{equation}
where $\widetilde{R}_{m,b,c}$ is a linear combination (with scalar coefficients) of products of numbers of the form $F_{(P^1_0(k))^l,c'(k)}^{b'(k)}(q)$ with $l<m$ and $c',b'\in\NN_0^{I_{2,w}}$, to see this combine formulae (\ref{eq-assoc-FF}) and (\ref{form_semisimple2}). By the induction hypothesis, $\widetilde{R}_{m,b,c}$ depends polynomially on $q=|k|$. On the other hand, the l.h.s of (\ref{form_semisimple2}) is the number of all subobjects $U$ of $b(k)$ such that $b(k)/U\simeq c(k)$. It is known by \cite{ringel} that this number depends polynomially on $q$.  Therefore, by \cite[Lemma, p. 441]{ringel} so does $F_{(P^1_0(k))^m,c(k)}^{b(k)}(q)$, because the leading coefficient of $h_m$ is equal to $\pm 1$. We have proved that $(P^1_0(k))^m$ is in the polynomial zone.
\end{proof}

\begin{lem}\label{lem_semisimple2} For every $m,w\in\NN$, the module $(P^0_1)^m$ is in the polynomial zone $\PP_w$.
\end{lem}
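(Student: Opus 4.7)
My plan is to mimic the strategy of Lemma~\ref{lem_semisimple}, now in the first coordinate subcategory of $\Lambda_{2,w}$-mod (the full subcategory of modules $(V',0,0)$, equivalent to $k[x]/(x^2)$-mod). I induct on $m$; the base case $m=1$ is Lemma~\ref{lem_simple}.

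For $m=2$, a direct computation of extensions (or Lemma~\ref{lem_relacja2} for $n=2$) yields
\[
u_{P_1^0}^2 = u_{P_2^0} + (q+1)\,u_{(P_1^0)^2}
\]
in $\mathcal{H}(\Lambda_{2,w}(k))$. Multiplying by $u_{c(k)}$ and extracting the coefficient of $u_{b(k)}$ gives
\[
F^{b(k)}_{P_1^0,P_1^0,c(k)} = F^{b(k)}_{P_2^0,c(k)} + (q+1)\,F^{b(k)}_{(P_1^0)^2,c(k)},
\]
whose left-hand side is a polynomial in $q$ by Lemma~\ref{lem_simple} and formula~(\ref{eq-assoc-FF}). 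On the other hand, arguing exactly as in the proof of Lemma~\ref{lem_semisimple} but in the first coordinate, the sum $F^{b(k)}_{(P_1^0)^2,c(k)} + F^{b(k)}_{P_2^0,c(k)}$ counts submodules $U=(W,0,0)$ of $b(k)$ (necessarily $W\subseteq\ker f_{b(k)}$) of $k$-dimension $2$ with $b(k)/U\cong c(k)$; this either coincides with the total submodule count of Ringel (polynomial in $q$) when $\dim_k V''_{b(k)}=\dim_k V''_{c(k)}$, or vanishes identically, so it is polynomial. Subtracting shows that $q\cdot F^{b(k)}_{(P_1^0)^2,c(k)}$ is a polynomial in $q$, hence so is $F^{b(k)}_{(P_1^0)^2,c(k)}$ by~\cite[Lemma, p.~441]{ringel}.

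For $m\geq 3$, I plan to apply Lemma~\ref{lem_relacja2} in the first coordinate subalgebra to obtain
\[
[m]_q\,u_{(P_1^0)^m} = u_{(P_1^0)^{m-1}}\,u_{P_1^0} - u_{(P_1^0)^{m-2}\oplus P_2^0},
\]
and then to eliminate the last term. Since $P_2^0\cong k[x]/(x^2)$ is projective-injective in $k[x]/(x^2)$-mod, every extension of $P_2^0$ by $(P_1^0)^{m-2}$ splits; a direct count (the number of $(m-2)$-dimensional subspaces of the $(m-1)$-dimensional socle of $(P_1^0)^{m-2}\oplus P_2^0$ that avoid $\soc P_2^0$) shows that
\[
u_{(P_1^0)^{m-2}}\,u_{P_2^0} = q^{m-2}\,u_{(P_1^0)^{m-2}\oplus P_2^0}.
\]
Substituting $u_{P_2^0}=u_{P_1^0}^2-(q+1)u_{(P_1^0)^2}$ from the $m=2$ identity and multiplying through by $q^{m-2}$ produces
\[
q^{m-2}[m]_q\cdot u_{(P_1^0)^m} = q^{m-2}\,u_{(P_1^0)^{m-1}}u_{P_1^0} - u_{(P_1^0)^{m-2}}u_{P_1^0}^2 + (q+1)\,u_{(P_1^0)^{m-2}}u_{(P_1^0)^2}.
\]
The right-hand side is a $\ZZ[t]$-linear combination of products of elements $u_{(P_1^0)^l}$ with $l\in\{1,2,m-2,m-1\}\subset\{1,\ldots,m-1\}$, all lying in $\PP_w$ by the induction hypothesis and the $m=2$ step; the coefficient $q^{m-2}[m]_q$ has leading coefficient $1$. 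Lemma~\ref{lem_inductive} therefore gives $(P_1^0)^m\in\PP_w$.

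The main technical obstacle is that Lemma~\ref{lem_relacja2} degenerates for $m=2$: the module $(P_1^0)^{m-2}\oplus P_2^0$ becomes $P_2^0$, so the identity $u_{(P_1^0)^{m-2}}u_{P_2^0}=q^{m-2}u_{(P_1^0)^{m-2}\oplus P_2^0}$ becomes trivial, and substituting $u_{P_2^0}=u_{P_1^0}^2-(q+1)u_{(P_1^0)^2}$ collapses the relation to the tautology $(q+1)u_{(P_1^0)^2}=(q+1)u_{(P_1^0)^2}$. This is precisely why $m=2$ has to be bootstrapped by the separate Ringel-based argument above before the Lemma~\ref{lem_inductive}-based induction can be launched for $m\geq 3$.
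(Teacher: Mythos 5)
Your argument is correct and reaches the same conclusion, but it takes a genuinely different route from the paper's in the key technical step. Both proofs induct on $m$, use Lemma~\ref{lem_relacja2} to pass from $u_{(P_1^0)^m}$ to a relation involving $u_{(P_1^0)^{m-1}}u_{P_1^0}$ and $u_{(P_1^0)^{m-2}\oplus P_2^0}$, and conclude by Lemma~\ref{lem_inductive}. Where you differ is in how the term $u_{(P_1^0)^{m-2}\oplus P_2^0}$ is handled. The paper observes that $P_2^0$ is injective over $\Lambda_{2,w}(k)$ and applies Lemma~\ref{lem_injective} to deduce from the induction hypothesis $(P_1^0)^{m-2}\in\PP_w$ that $P_2^0\oplus(P_1^0)^{m-2}\in\PP_w$, and this works uniformly for every $m\ge 2$. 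You instead avoid Lemma~\ref{lem_injective} altogether: you bootstrap $(P_1^0)^2\in\PP_w$ by a Ringel total-count argument mirroring Lemma~\ref{lem_semisimple}, and for $m\ge 3$ you eliminate $u_{(P_1^0)^{m-2}\oplus P_2^0}$ by establishing $u_{(P_1^0)^{m-2}}u_{P_2^0}=q^{m-2}u_{(P_1^0)^{m-2}\oplus P_2^0}$ (which is correct: the splitting holds because $P_2^0\cong k[x]/(x^2)$ is projective-injective in the extension-closed first-component subcategory), clearing denominators, and substituting the $m=2$ identity for $u_{P_2^0}$. The degeneracy you note at $m=2$ is a real feature of your substitution scheme, not of the paper's argument, which treats $m=2$ with the same injectivity step. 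The trade-off is what you would expect: the paper's route is shorter and uniform but relies on the injective-summand machinery; yours is longer and needs the separate bootstrap but never invokes injectivity. One small wording issue in your $m=2$ step: the sum $F^{b(k)}_{(P_1^0)^2,c(k)}+F^{b(k)}_{P_2^0,c(k)}$ equals the Ringel total count only when in addition $\dim_k b(k)-\dim_k c(k)=2$; otherwise it vanishes. This does not affect the conclusion, since the sum is polynomial in every case, and the paper's own proof of Lemma~\ref{lem_semisimple} elides the analogous case distinction.
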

\begin{proof} We proceed by induction on $m$. For $m=1$ the assertion follows by Lemma \ref{lem_simple}. Assume that $m>1$ and  $(P^0_1)^s$ is in polynomial zone for every $1\le s < m$.
The $\Lambda_{2,w}(k)$-module $P^0_2(k)$ is injective, thus by Lemma \ref{lem_injective} $P^0_2\oplus (P^0_1)^{m-2}$ is in the polynomial zone\footnote{We apply the fact that the number $|Mon(a(k),b(k))|$ depends polynomially on $q=|k|$, see \cite[p. 441]{ringel}.}, as $(P^0_1)^{m-2}$ is in the polynomial zone by the induction hypothesis.
Now $(P^0_1)^m$ is in the polynomial zone thanks to Lemma \ref{lem_inductive} and Lemma \ref{lem_relacja2}.

\end{proof}

\begin{lem}\label{lem_P0n} The  modules  $P^n_0$ are  in the polynomial zone for every $1\le n\le w$.
\end{lem}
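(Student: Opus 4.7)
The plan is to apply Lemma~\ref{lem_inductive} with $h(t)=1$, by exhibiting $u_{P_0^n}$ as a $\ZZ[t]$-linear combination of products of elements $u_b$ already known to lie in $\PP_w$. The only input from the preceding lemmas that I need is Lemma~\ref{lem_semisimple}, which gives $(P_0^1)^m\in\PP_w$ for every $m\in\NN$.

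The first step is to note that the $\mathbb{C}$-span of $\{u_{P_0^\mu}:\mu\text{ a partition with }\mu_1\le w\}$ is a subalgebra of $\mathcal{H}(\Lambda_{2,w}(k))$ canonically isomorphic to (the relevant truncation of) the classical Hall algebra $\mathcal{H}(q)$ via $u_\mu\mapsto u_{P_0^\mu}$. This is immediate from the fact that any extension of $P_0$-modules has trivial first component, so the classical structure constants $F^{\lambda}_{\mu,\nu}(q)$ coincide with the corresponding $F$-numbers in $\mathcal{H}(\Lambda_{2,w}(k))$; hence any identity in $\mathcal{H}(q)$ among such elements lifts to one in $\mathcal{H}(\Lambda_{2,w}(k))$. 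Next, I would extract from the proof of Proposition~\ref{prop_Justyna} the stronger fact (Claim~1 therein) that the matrix $A=[a_{\lambda,\mu}]_{\lambda,\mu\in\CP_n}$ with $v_{\lambda'}=\sum_\mu a_{\lambda,\mu}u_\mu$ is upper triangular over $\ZZ[q]$ with ones on the diagonal, and is therefore invertible over $\ZZ[q]$. Writing $A^{-1}=[a'_{\mu,\lambda}]$ and specializing to $\mu=(n)$ yields $u_{(n)}=\sum_{\lambda\in\CP_n}a'_{(n),\lambda}(q)\,v_{\lambda'}$ with $a'_{(n),\lambda}\in\ZZ[t]$. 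Transferring this identity via the embedding of the previous paragraph gives
\[
u_{P_0^n}=\sum_{\lambda\in\CP_n}a'_{(n),\lambda}(q)\,u_{(P_0^1)^{\lambda_1'}}\cdots u_{(P_0^1)^{\lambda_s'}}
\]
in $\mathcal{H}(\Lambda_{2,w}(k))$ for every finite field $k$.

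Each factor $u_{(P_0^1)^m}$ on the right-hand side is indexed by an element of $\PP_w$ by Lemma~\ref{lem_semisimple}, so the whole expression is a $\ZZ[t]$-linear combination of products of elements $u_b$ with $b\in\PP_w$. Lemma~\ref{lem_inductive} applied with $h(t)=1$ (leading coefficient $+1$) then delivers $P_0^n\in\PP_w$. I do not anticipate a substantive obstacle: the only mild subtlety is verifying the embedding of the classical Hall algebra into $\mathcal{H}(\Lambda_{2,w}(k))$, after which the argument reduces to the linear-algebraic content already present in the proof of Proposition~\ref{prop_Justyna}; in particular, one does not need to invoke any further induction on $n$ nor the explicit shape of the inverse matrix $A^{-1}$.
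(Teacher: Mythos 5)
Your proof is correct and follows essentially the same route as the paper: specialize formula (\ref{eq_ulambda}) to $\mu=(n)$, carry the identity over to $\mathcal{H}(\Lambda_{2,w}(k))$, and apply Lemma \ref{lem_semisimple} together with Lemma \ref{lem_inductive}. Your explicit justification of the subalgebra embedding $u_\mu\mapsto u_{P_0^\mu}$ (which the paper uses tacitly) and your observation that $h=1$ already works, so the leading-coefficient fact from Claim~3 of Proposition \ref{prop_Justyna} is not actually needed here, are both correct and amount to a mild streamlining rather than a different argument.
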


\begin{proof} For $\lambda=(n)$ the formula (\ref{eq_ulambda}) yields
\begin{equation}
u_{P^{(n)}_0}=\sum_{\lambda}a'_{(n),\lambda}v_{\lambda'}=a'_{(n),(1^n)}u_{((P^1_0)^n)}+ \sum_{\lambda:\lambda\neq (1^n)}a'_{(n),\lambda}u_{((P^1_0)^{\lambda'_1})}\ldots u_{((P^1_0)^{\lambda'_s})},
\end{equation}
where we set $\lambda'=(\lambda'_1,\ldots,\lambda'_s)$. It follows by Lemma \ref{lem_semisimple} and Lemma \ref{lem_inductive} that $P^{(n)}_0$ is in the polynomial zone. We use the fact that the leading coefficient of the polynomial $a'_{(n),(1^n)}$ is $\pm 1$ (Claim 3 in the proof of Proposition \ref{prop_Justyna}).
\end{proof}

\begin{lem} \label{lem_indecomposable} Every indecomposable module is in polynomial zone.
\end{lem}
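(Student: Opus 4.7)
The plan is to check polynomial-zone membership for each class of indecomposable modules listed in Lemma~\ref{lem_classification}, using the relations of Lemma~\ref{lem-reduction-forms} together with Lemma~\ref{lem_inductive}. The simples $P_1^0, P_0^1$ are Lemma~\ref{lem_simple}; $P_0^n$ for $1\le n\le w$ is Lemma~\ref{lem_P0n}; and the injective $P_2^0$ is placed in $\PP_w$ by Lemma~\ref{lem_injective} (applied with $X=P_2^0$, $X'=0$) together with the polynomiality of $|\mathrm{Mon}_A(P_2^0, b(k))|$ from \cite[p.~441]{ringel}. Relation~(1) combined with Lemma~\ref{lem_inductive} (with $h=1$) gives $P_1^n\in\PP_w$ for every $n\ge 1$, and Relation~(2) analogously gives $P_2^1=Z_2^1\in\PP_w$.

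For $P_2^2$ and $Z_2^2$, I would combine the special $n=2$ form of Relation~(3) with Relation~(4) to eliminate $u_{P_2^2}$ and obtain
\[
(q+1)u_{Z_2^2} = q u_{Z_2^1}u_{P_0^1} - u_{P_0^1}u_{Z_2^1} + u_{P_0^2}u_{P_2^0} - u_{P_2^0}u_{P_0^2};
\]
since $h(q)=q+1$ is monic, Lemma~\ref{lem_inductive} places $Z_2^2$ in $\PP_w$, and then Relation~(4) places $P_2^2$ there as well.

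The \emph{main obstacle} is the analogous step for $m\ge 3$: the direct combination of Relations~(3) and~(4) yields only $2u_{Z_2^m}=R(q)$, whose reduction coefficient $2$ is not $\pm 1$ and is therefore not admissible for Lemma~\ref{lem_inductive}. My plan is a simultaneous induction on $m$ that also incorporates the decomposable modules $B_2^{m-1,1}$ and $P_2^{m-1}\oplus P_0^1$. For $m\ge 4$, equating the two expressions for $u_{P_2^m}$ coming from Relations~(5) and~(6) (taken with $n=m-1\ge 3$) eliminates $u_{P_2^m}$ and produces
\[
q\bigl(u_{B_2^{m-1,1}}+u_{P_2^{m-1}\oplus P_0^1}\bigr)=u_{P_2^{m-1}}u_{P_0^1}+u_{P_1^{m-1}}u_{P_1^1}-u_{P_1^1}u_{P_1^{m-1}},
\]
with monic reduction coefficient $q$ and right-hand side in $\PP_w$ by the inductive hypothesis. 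Coupled with the boundary cases $m=w$, where Relations~(6') and~(5') place $B_2^{w,1}$ and $P_2^w\oplus P_0^1$ in $\PP_w$ without any reference to the nonexistent $P_2^{w+1}$, together with the individual Relations~(3)--(6), this should allow the induction to close and simultaneously establish $P_2^m$, $Z_2^m$, $B_2^{m-1,1}$ and $P_2^{m-1}\oplus P_0^1$ in $\PP_w$ for each $m\ge 4$. The smallest case $m=3$ requires separate treatment, since Relations~(5) and~(6) are unavailable for $n=2$; here one must either exploit the concrete structure of $2u_{Z_2^3}=R(q)$ to show that every coefficient of $R$ is even, or weave in an ad~hoc identity from Lemma~\ref{lem-reduction-forms}. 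The precise bookkeeping needed to keep every reduction coefficient monic is the technical heart of the argument.

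Finally, the bundle modules $B_2^{m',m}$ for $m\ge 2$ are handled by a secondary induction on $m$ from the second formula of Relation~(7), whose reduction coefficient $q^m$ is monic; the auxiliary modules $P_2^{n+1}\oplus P_0^{m-1}$ appearing on the right are placed in $\PP_w$ in parallel by the first formula of Relation~(7), again with monic reduction coefficient $q^m$. Since every indecomposable of Lemma~\ref{lem_classification} is thereby covered, this completes the proof.
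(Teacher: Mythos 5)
Your overall strategy mirrors the paper's: simples and $P_0^n$ first, then $P_2^0$ by injectivity, then the remaining indecomposables via the relations of Lemma~\ref{lem-reduction-forms} fed into Lemma~\ref{lem_inductive}, with $B_2^{n,m}$ last via relation~(7). You also correctly note that combining relations~(3) and~(4) yields $(q+1)u_{Z_2^2}=\cdots$ in the case $n=2$ (where the special $\tfrac1q$-form of~(3) saves the day), and you correctly flag that for $n\ge 3$ the same elimination degenerates to $2u_{Z_2^n}=R(q)$, whose constant leading coefficient $2$ is not covered by Lemma~\ref{lem_inductive}. The paper's proof simply cites relations (1)--(4) at this point without addressing the factor of~$2$, so your observation is a fair one about the presentation in the text.

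However, your proposed repair does not close the gap. Equating the two expressions for $u_{P_2^m}$ from relations~(5) and~(6) produces
\[
q\bigl(u_{B_2^{m-1,1}}+u_{P_2^{m-1}\oplus P_0^1}\bigr)=u_{P_2^{m-1}}u_{P_0^1}+u_{P_1^{m-1}}u_{P_1^1}-u_{P_1^1}u_{P_1^{m-1}},
\]
but Lemma~\ref{lem_inductive} applies to a single basis element $u_a$ on the left, not a sum. Multiplying by $u_c$ and reading off the coefficient of $u_b$ gives only that $F_{B_2^{m-1,1},c}^b+F_{P_2^{m-1}\oplus P_0^1,c}^b$ is a polynomial in~$q$, and since $B_2^{m-1,1}$ and $P_2^{m-1}\oplus P_0^1$ have the same pair of ambient and subspace partitions (hence the same dimension vector), both summands can be nonzero for the same $(b,c)$ and cannot be separated from this identity alone. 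Moreover, it does not help you reach $P_2^m$ or $Z_2^m$ either: any attempt to extract $u_{P_2^m}$ from (5) or (6) re-introduces $u_{B_2^{m-1,1}}$ or $u_{P_2^{m-1}\oplus P_0^1}$, and extracting $u_{Z_2^m}$ via (3)$\&$(4) again gives leading coefficient~$2$. You acknowledge that the ``precise bookkeeping'' for $m\ge 3$ is unresolved and mention as alternatives showing $R$ has all even coefficients or ``weaving in an ad hoc identity''; these are reasonable avenues but are not carried out, so the proposal does not constitute a proof. In summary: the cases $P_0^n$, $P_1^0$, $P_2^0$, $P_1^n$, $Z_2^1$, $Z_2^2$, $P_2^2$, $B_2^{n,m}$ (given $P_2^n\in\PP_w$) are handled correctly and agree with the paper, but the inductive step producing $P_2^n$ and $Z_2^n$ for $n\ge 3$ remains a genuine gap in your argument.
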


\begin{proof} Let $N$ be one of the indecomposable modules listed in Lemma \ref{lem_classification}. We are going to prove that $N$ is in the polynomial zone. If $N$ is of the form  $P^n_0$ then this follows by Lemma \ref{lem_P0n}. This is also true for $N=P_1^0$ by Lemma \ref{lem_simple} and for $N=P^0_2$ by the injectivity and Lemma \ref{lem_injective}.

Now by applying Lemma \ref{lem_inductive}  and Lemma \ref{lem-reduction-forms} (1)-(3) we get the assertion for $N$ of the form $P^n_1$ for $n\ge 1$ and $P^1_2=Z_2^1$ and $Z_2^n$ for $n\ge 2$.

Having this, we prove similarly  by Lemma \ref{lem-reduction-forms} (4) that $P_2^n$ are in the polynomial zone for $2\le n\le w$.

In the next step,  by Lemma \ref{lem-reduction-forms} (6) we prove the assertion for $N=B_2^{n,1}$ for $n\ge 3$, again applying Lemma \ref{lem_inductive}. At the same time we show by Lemma \ref{lem-reduction-forms} (5) the the decomposable modules
$P_2^n\oplus P_0^1$, $n\ge 3$  are in the polynomial zone.

We shall prove that for every $n,m\le w$ the modules  $P_2^n\oplus P_0^m$ and $B^{n,m}_2$ are in the polynomial zone by the induction on $m$. For $m=1$ the assertion is proved above. Assume that $m>1$ and the modules $P_2^n\oplus P_0^{m-s}$ and $B^{n,m-s}_2$ are in the polynomial zone for every $m>s>1$. Then our assertion follows by Lemma \ref{lem-reduction-forms} (7) and Lemma \ref{lem_inductive}.
\end{proof}

Now Theorem \ref{thm-main} follows by \cite[Theorem 2.9]{nasr-isf}:

 \begin{thm}
  Let $A$ be a representation finite algebra over finite field $k$. Then $A$
has a~Hall polynomials if and only if for each $X, Z \in \mod A$ and $Y \in \ind A$, Hall
polynomial $\varphi^X_{Y,Z}$ exist.
 \end{thm}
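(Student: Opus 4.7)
The direction $(\Rightarrow)$ is immediate: an indecomposable module is, in particular, a module. For $(\Leftarrow)$, I would induct on $\dim_k Y$, and within each dimension stratum on a well-founded refinement of the degeneration partial order on isomorphism classes --- a finite order because $A$ is representation finite --- going downward from the most generic classes toward the semisimple one. The base layer at each dimension contains the indecomposables of that dimension, which are covered by the hypothesis.

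For the inductive step, let $Y = Y_1 \oplus \cdots \oplus Y_k$ with $k \geq 2$ be the Krull--Schmidt decomposition. The key relation is the identity
\[
u_{Y_1} u_{Y_2} \cdots u_{Y_k} \;=\; F^{Y}_{Y_1, \ldots, Y_k}(q) \cdot u_Y \;+\; \sum_{[M] \neq [Y]} F^{M}_{Y_1, \ldots, Y_k}(q) \cdot u_M
\]
in $\CH(A)$, where the sum effectively runs over $[M]$ with $\dim_k M = \dim_k Y$. By iterated use of (\ref{eq-assoc-FF}), each structure constant $F^{M}_{Y_1, \ldots, Y_k}(q)$ decomposes into a sum of products of Hall numbers $F^{N'}_{Y_i, N''}(q)$ with indecomposable $Y_i$, hence is polynomial in $q$ by the hypothesis. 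Multiplying the identity on the right by $u_Z$ and reading off the $u_X$-coefficient yields
\[
F^{Y}_{Y_1, \ldots, Y_k}(q) \cdot F^X_{Y, Z}(q) \;=\; F^X_{Y_1, \ldots, Y_k, Z}(q) \;-\; \sum_{[M] \neq [Y]} F^{M}_{Y_1, \ldots, Y_k}(q) \cdot F^X_{M, Z}(q),
\]
in which $F^X_{Y_1, \ldots, Y_k, Z}(q)$ is also polynomial in $q$ for the same reason.

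The main obstacle is that the correction modules $M$ on the right satisfy $\dim_k M = \dim_k Y$ and may carry any number of indecomposable summands, so neither induction on dimension nor on summand count alone closes the argument. The resolution is precisely the downward induction along the degeneration order: every $M \neq Y$ arising as the middle term of a non-split extension among the $Y_i$ is strictly above $Y$ in this order (the split sum $Y$ lies in the closure of the orbit of any such non-split $M$), so by the inductive hypothesis $F^X_{M, Z}(q)$ already depends polynomially on $q$. Since $F^Y_{Y_1, \ldots, Y_k}(q) \geq 1$ for every prime power $q$ --- the split filtration of $Y$ always exists --- the polynomial $F^Y_{Y_1, \ldots, Y_k}$ is nonzero; then the cited argument of Ringel (\cite[Lemma, p.~441]{ringel}), applied to the displayed identity in the spirit of Lemma~\ref{lem_inductive}, upgrades the rational relation to polynomiality of $F^X_{Y, Z}(q)$. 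This closes the induction on the dimension stratum, hence the outer induction, completing the $(\Leftarrow)$ direction.
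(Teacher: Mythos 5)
First, note that the paper does not prove this theorem at all: it is quoted verbatim from Nasr--Isfahani \cite[Theorem 2.9]{nasr-isf} as an external result and used as a black box to deduce Theorem~\ref{thm-main} from Lemma~\ref{lem_indecomposable}. So there is no in-paper proof to compare against, and your argument has to stand on its own.

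Your mechanism --- expand $u_{Y_1}\cdots u_{Y_k}$ in $\CH(A)$, multiply on the right by $u_Z$, read off the $u_X$-coefficient, and induct downward along a degeneration-dominated well-founded order --- is the right skeleton and is visibly the same engine as the paper's Lemma~\ref{lem_inductive}. But there is a real gap at the very last step. You justify invertibility of the leading factor only by $F^Y_{Y_1,\ldots,Y_k}(q)\ge 1$, and then invoke Ringel's Lemma from \cite[p.~441]{ringel}. That lemma only upgrades an integer-valued function that agrees with a rational function at prime powers to a polynomial with \emph{rational} coefficients. The notion of ``Hall polynomial'' in force here (see Theorem~\ref{thm-main} and the Introduction) demands $\varphi^X_{Y,Z}\in\ZZ[t]$. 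This is exactly why Lemma~\ref{lem_inductive} requires the clearing factor $h$ to have leading coefficient $\pm 1$: then $h\psi=g$ with $g\in\ZZ[t]$ forces $\psi\in\ZZ[t]$ by long division. Your $h=F^Y_{Y_1,\ldots,Y_k}$ is not shown to be the value of a monic polynomial, and ``nonzero'' alone leaves you in $\QQ[t]$ (compare $h=2$, $g=T(T-1)$, $\psi=T(T-1)/2$). The fact you need is that $F^Y_{Y_1,\ldots,Y_k}$ for the split module $Y=\bigoplus Y_i$ is indeed monic; this is true --- e.g.\ $F^{A\oplus B}_{A,B}=q^{\dim_k\Hom_A(A,B)}$ for non-isomorphic indecomposables $A,B$, and $F^{A\oplus A}_{A,A}=q^r(q^e+1)$ where $e,r$ are the residue degree and radical dimension of the local ring $\End_A(A)$, followed by induction via \eqref{eq-assoc-F} --- but it must be proved, and as written your argument does not establish it.

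Two secondary imprecisions in the inductive frame. ``The base layer at each dimension contains the indecomposables'' misidentifies the base: the minimal elements for your downward induction are the \emph{degeneration-minimal} (generic) classes, which neither contain all indecomposables nor consist only of them; this happens to be harmless because for a generic decomposable $Y$ the correction sum is empty and the identity already closes, but the phrasing suggests the base case is handled by hypothesis when in fact part of it is handled by emptiness of the correction terms. Second, the ``well-founded refinement of the degeneration order'' must be uniform across the finite fields over which $q$ ranges; for a representation-finite algebra this can be arranged (e.g.\ via $\dim_k\End$, or via the fact that the set of $[M]$ with $F^M_{Y_1,\ldots,Y_k}\neq 0$ and the predicate $M\ncong Y$ are stable under field extension once the classification of indecomposables is), but you should say so, since otherwise the inner induction is not a single induction but a family of them, one per field.
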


\pagebreak

Address of the authors:

\parbox[t]{5.5cm}{\footnotesize\begin{center}
              Faculty of Mathematics\\
              and Computer Science\\
              Nicolaus Copernicus University\\
              ul.\ Chopina 12/18\\
              87-100 Toru\'n, Poland\end{center}}
\parbox[t]{5.5cm}{\footnotesize\begin{center}
              Faculty of Mathematics\\
              and Computer Science\\
              Nicolaus Copernicus University\\
              ul.\ Chopina 12/18\\
              87-100 Toru\'n, Poland\end{center}}

\smallskip \parbox[t]{5.5cm}{\centerline{\footnotesize\tt skasjan@mat.umk.pl}}
           \parbox[t]{5.5cm}{\centerline{\footnotesize\tt justus@mat.umk.pl}}

\end{document}